\documentclass{amsart}

\newtheorem{Theo}{Theorem}
\newtheorem{Lem}{Lemma}
\newtheorem{Prop}{Proposition}
\newtheorem{Def}{Definition}

\newcommand{\R}{\mathbb{R}}

\newcommand{\C}{\mathbb{C}}
\newcommand{\N}{\mathbb{N}}

\usepackage{enumerate}  
\usepackage{hyperref}  
\usepackage{color}
\usepackage[utf8]{inputenc}

\begin{document}
\title[Tauberian theorems and the Riemann-Lebesgue Lemma]{Newman's Tauberian theorem, the Riemann-Lebesgue Lemma, and abstract analytic number theory}
\author[J.-C. Schlage-Puchta]{Jan-Christoph Schlage-Puchta}
\author[C. Schwerdt]{Christoph Schwerdt}

\begin{abstract}
We give a generalized and effective version of Bekehermes' improvement of Newman's Tauberian theorem. To do so we prove an effective version of the Riemann-Lebesgue Lemma for functions of bounded $p$-variation. We apply our Tauberian theorem to abstract analytic semigroups and prove a version of the prime number theorem as well as an estimate for Mertens' function with explicit error term. 
\end{abstract}

\maketitle

\tableofcontents

\section{Introduction}

Let $\sum_{k=1}^\infty a_k$ be convergent for a sequence of complex numbers $(a_{k}) \subset \C$.
Furthermore let $G$ be a holomorphic function on a complex set $D \subset \C$ with $B(0,1) \subset D$
that satisfies 
$$
G(z)=\sum_{k=1}^\infty a_k z^k
$$
for every $z \in B(0,1)$. Abel proved that $G$ can be extended continuously to $z=1$ with $G(1) = \sum_{k=1}^\infty a_k$. 
In general the reverse implication is not true, however. Take $G(z) = 1/1+z$ for every 
$z \in \C \setminus {\{-1\}}$ as an example which satisfies
$$
G(z) = \frac{1}{1+z} = \sum_{k=0}^\infty (-1)^{k} z^k
$$
for every $z \in B(0,1)$. Therefore $G$ is characterised as a series of the form $\sum_{k=1}^\infty a_k z^k$ on the unit circle.
But even if $G(1) = 1/2$ is true, there is obviously no convergence of $\sum_{k=1}^\infty a_k$ for $a_{k} = (-1)^{k}$ implied.
However, Tauber proved that the existence of a continuous extension of $G$
to $z=1$ implies the existence of $\sum_{k=1}^\infty a_k$ being equal to $G(1)$, whenever 
$a_k = o(\frac{1}{k})$ is true.\\

In general Tauberian theorems show that a series, which has a value according to some summation method and satisfies additional conditions, has the same value according to some stronger summation method. These conditions usually involve either the summands themselves, as $a_k = o(\frac{1}{k})$ in Tauber's theorem, or the analytic behaviour of a function associated to the series. An important Tauberian theorem is Newman's theorem, which has many applications.\\

\begin{Theo}[Newman]
Let $f \colon [0, \infty) \rightarrow \C$ be bounded and locally integrable. Furthermore define 
$$
g(z)=\int_0^\infty f(t) e^{-zt}\;dt
$$
for $z \in \C$ with $\Re z > 0$. Suppose $g$ can be extended holomorphically 
to some open neighbourhood of $\{z \in \C : \Re z \geq 0 \}$. 
Then $\int_0^\infty f(t)\;dt$ exists and equals $g(0)$.\\
\end{Theo}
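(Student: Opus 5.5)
The plan is to follow Newman's contour-integration argument, in Zagier's streamlined form. For $T>0$ put
$$
g_T(z)=\int_0^T f(t)e^{-zt}\,dt .
$$
Since $f$ is locally integrable, $g_T$ is entire. The statement is then equivalent to $g_T(0)\to g(0)$ as $T\to\infty$, and it suffices to show
$$
\limsup_{T\to\infty}|g(0)-g_T(0)|\le \frac{2B}{R}
$$
for every $R>0$, where $B=\sup|f|$. Letting $R\to\infty$ then finishes the proof.

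\textbf{Choosing the contour.} Fix $R$. Because $g$ extends holomorphically to an open neighbourhood of the closed half-plane, and $\{\Re z\ge 0,\ |z|\le R\}$ is compact, there is $\delta=\delta(R)>0$ such that $g$ is holomorphic on a neighbourhood of the region $\{|z|\le R,\ \Re z\ge -\delta\}$. Let $C$ be the boundary of this region. Apply Cauchy's integral formula to the holomorphic function
$$
(g(z)-g_T(z))\,e^{zT}\Bigl(1+\frac{z^2}{R^2}\Bigr)\frac1z .
$$
This gives
$$
g(0)-g_T(0)=\frac{1}{2\pi i}\int_C (g(z)-g_T(z))\,e^{zT}\Bigl(1+\frac{z^2}{R^2}\Bigr)\frac{dz}{z}.
$$

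\textbf{Right semicircle $C_+$.} Here $\Re z>0$, so
$$
|g(z)-g_T(z)|=\Bigl|\int_T^\infty f(t)e^{-zt}\,dt\Bigr|\le \frac{B e^{-T\Re z}}{\Re z}.
$$
For $|z|=R$ one has
$$
\Bigl|e^{zT}\Bigl(1+\frac{z^2}{R^2}\Bigr)\frac1z\Bigr| = e^{T\Re z}\,\frac{2\Re z}{R^2}.
$$
The integrand on $C_+$ is therefore bounded by $2B/R^2$. Since $C_+$ has length $\pi R$, its contribution is at most $B/R$.

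\textbf{Left part $C_-$.} Split the integrand into the $g$ term and the $g_T$ term.

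\emph{The $g_T$ term.} Since $g_T$ is entire, the path can be replaced by the left semicircle $|z|=R$, $\Re z<0$. There
$$
|g_T(z)|\le \frac{B e^{-T\Re z}}{|\Re z|},
$$
and the same identity as above bounds this contribution by $B/R$.

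\emph{The $g$ term.} On $C_-$ the factor $g(z)(1+z^2/R^2)/z$ is independent of $T$, and $C_-$ is compact and avoids $0$, so this factor is bounded there. Meanwhile $e^{zT}\to 0$ pointwise on $C_-$ minus its two endpoints on the imaginary axis, and is bounded by $1$. By dominated convergence this integral tends to $0$ as $T\to\infty$.

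Combining the three pieces gives $\limsup_{T\to\infty}|g(0)-g_T(0)|\le 2B/R$, which proves the claim.

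\textbf{Main obstacle.} The delicate point is the choice of kernel $(1+z^2/R^2)/z$. Its modulus on $|z|=R$ is exactly $2|\Re z|/R^2$, which cancels the $1/|\Re z|$ singularity in the tail estimate near the imaginary axis; without it the estimates on both semicircles fail. A smaller care point is that $\delta$ depends on $R$ via compactness. This costs nothing here, because $R$ is fixed while $T\to\infty$.
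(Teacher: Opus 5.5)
Your proof is correct. It is the standard Newman--Zagier contour argument, and every step checks out:
\begin{itemize}
\item the identity $|1/z+z/R^2|=2|\Re z|/R^2$ on $|z|=R$;
\item the bound $B/R$ on each semicircle;
\item the deformation of the $g_T$-part of $C_-$ to the left semicircle, which is legitimate because the pole at $0$ lies to the right of the segment $\Re z=-\delta$;
\item dominated convergence for the $g$-part.
\end{itemize}

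The paper states Newman's theorem without proof, so the natural comparison is with its proof of part a) of Theorem~\ref{thm:Laplace}. That proof uses the same kernel $e^{Tz}(1/z+z/R^2)$ and the same two semicircle estimates as you. It handles the remaining piece differently, because it does not assume that $g$ continues across $\Re z=0$. So it cannot push the $g$-integral into $\Re z<0$ and let $e^{T\Re z}\to 0$ do the work. Instead, it applies Cauchy's formula at a point $\sigma>0$ over the boundary of the right half-disc of radius $R$ centred at $\varepsilon\in(0,\sigma/2)$. It then passes to boundary values $\tilde g(it)$ via $L^1_{\mathrm{loc}}$-convergence as $\varepsilon\to 0^+$, and finally lets $\sigma\to 0^+$. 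Because this contour runs through the origin, two extra ingredients are needed:
\begin{itemize}
\item the vanishing condition $|g(z)|\le K|z|$ near $0$; in Newman's setting you would first normalise $g(0)=0$, e.g.\ by replacing $f$ with $f(t)-g(0)e^{-t}$;
\item a Riemann--Lebesgue argument to kill the oscillatory integral $\int\tilde g(it)\,(1/(it)+it/R^2)\,e^{itT}\,dt$ away from $t=0$ (step-function approximation qualitatively, Theorem~\ref{Hoelder_variation} quantitatively).
\end{itemize}

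Your route is shorter and gets the left-hand contribution for free from the decay of $e^{T\Re z}$. However, it is purely qualitative and depends on holomorphy across the axis. The paper's route pays with the Riemann--Lebesgue step and the normalisation at $0$. In return it works with boundary values on the imaginary axis alone. Under $p$-variation control of $\tilde g$ it also yields the explicit rates of part b), which drive the number-theoretic applications.
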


Bekehermes \cite{Tobias} improved this result by weakening the conditions of $g$. 
The proof can also be found in the survey article by Riemenschneider \cite{Riemenschneider}.

\begin{Theo}[Bekehermes]
\label{thm:Bekehermes}
Let $f \colon [0, \infty) \rightarrow \C$ be bounded and locally integrable. Define 
$$
g(z)=\int_0^\infty f(t) e^{-zt}\;dt
$$
for $z \in \C$ with $\Re z > 0$. Furthermore suppose $g$ can be extended continuously to the set 
$\{z \in \C : \Re z \geq 0 \}$ and suppose
$$
\underset{\Re z\geq 0}{\lim\limits_{z\rightarrow 0}}\frac{g(z)-g(0)}{z}
$$ 
exists. Then $\int_0^\infty f(t) \;dt$ exists and equals $g(0)$.\\
\end{Theo}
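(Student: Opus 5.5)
The plan is to reduce the statement to a Riemann--Lebesgue argument applied to the Laplace transform of the primitive of $f$. Set $M=\sup|f|$, $F(T)=\int_0^T f(t)\,dt$ and $\varphi(t)=F(t)-g(0)$ for $t\ge 0$, with $\varphi(t)=0$ for $t<0$. Then $\varphi$ is Lipschitz on $[0,\infty)$ with constant $M$, and $|\varphi(t)|\le C(1+t)$. Integrating by parts, for $\Re z>0$ we get $g(z)/z=\int_0^\infty F(t)e^{-zt}\,dt$. Hence
$$
h(z):=\frac{g(z)-g(0)}{z}=\int_0^\infty \varphi(t)e^{-zt}\,dt \qquad (\Re z>0).
$$
By hypothesis $h$ extends continuously to $\{\Re z\ge 0\}$: away from $0$ this follows from the continuity of $g$, and at $0$ it is exactly the assumed existence of the limit. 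The claim is that $\varphi(T)\to 0$, which is the same as $\int_0^\infty f\,dt=g(0)$.

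\textbf{Step 1 (smoothing).} Fix a nonnegative even kernel $k\in L^1(\R)$ with $\int k=1$, Fourier transform $\hat k$ supported in some $[-\lambda,\lambda]$, and $k(s)=O(|s|^{-4})$. The square of the Fejér kernel, suitably normalised, works. For $x>0$, Fubini applies because $|\varphi(t)|e^{-xt}$ is integrable. Fourier inversion then gives, up to a fixed normalising constant,
$$
\int_{\R} h(x+iy)\,\hat k(y)\,e^{iyT}\,dy \;=\; c\int_0^\infty \varphi(t)e^{-xt}\,k(T-t)\,dt .
$$

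\textbf{Step 2 (letting $x\to 0$).} On the left, $h$ is continuous on the compact set $[0,1]\times[-\lambda,\lambda]$, hence bounded and uniformly continuous there. So the left side tends to $\int h(iy)\hat k(y)e^{iyT}\,dy$. On the right, $|\varphi(t)k(T-t)|\le C(1+t)\,k(T-t)$ is integrable because $k$ decays like $|s|^{-4}$, and dominated convergence gives $c\int_{\R}\varphi(t)k(T-t)\,dt$. The function $y\mapsto h(iy)\hat k(y)$ is continuous with compact support, so the Riemann--Lebesgue lemma gives
$$
(\varphi*k)(T)\to 0 \qquad (T\to\infty).
$$
Replacing $k$ by $k_\varepsilon(s)=\varepsilon^{-1}k(s/\varepsilon)$ changes only the support of $\hat k$ to $[-\lambda/\varepsilon,\lambda/\varepsilon]$. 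The argument is otherwise unchanged, so $(\varphi*k_\varepsilon)(T)\to 0$ for every $\varepsilon>0$.

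\textbf{Step 3 (Tauberian step).} This is where boundedness of $f$ enters. Since $\int k_\varepsilon=1$ and $\varphi$ is $M$-Lipschitz on $[0,\infty)$, we would write
$$
|\varphi(T)-(\varphi*k_\varepsilon)(T)|\le M\int_{|s|<T}|s|\,k_\varepsilon(s)\,ds+\int_{s\ge T}\bigl(|\varphi(T)|+|\varphi(T-s)|\bigr)k_\varepsilon(s)\,ds .
$$
The first term is at most $M\varepsilon\int|s|k(s)\,ds$. The second term is $O\bigl((1+T)\int_{s\ge T}k_\varepsilon\bigr)=O(T\cdot T^{-3})\to 0$. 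Hence $\limsup_{T\to\infty}|\varphi(T)|\le C'M\varepsilon$ for every $\varepsilon>0$, which gives $\varphi(T)\to 0$.

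The main obstacle I expect is the justification in Step 2. The boundary values are only known to be continuous, and $\varphi$ grows linearly rather than being bounded. This is why the kernel must have fast decay (fourth power rather than the Fejér $|s|^{-2}$), so that dominated convergence applies and the tail in Step 3 vanishes. The other ingredient is the compact support of $\hat k$, which lets continuity of $h$ on a compact piece of the imaginary axis suffice.
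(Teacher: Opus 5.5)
Your proof is correct, and it takes a genuinely different route from the paper. The paper does not reprove Bekehermes' theorem itself; it refers to \cite{Tobias} and \cite{Riemenschneider}. Its own argument, given for the generalisation Theorem~\ref{thm:Laplace}~a), is Newman's contour method:
\begin{itemize}
\item apply Cauchy's formula with the kernel $e^{Tz}(1/z+z/R^2)$ on the segment $[-iR,iR]$ together with the right semicircle;
\item compare with the entire function $g_T(z)=\int_0^T f(t)e^{-zt}\,dt$ on the full circle;
\item bound both arcs by $\ll\|f\|_\infty/R$, which is where boundedness of $f$ is used;
\item split the segment into $|t|<\varepsilon$, where $\tilde g(it)/(it)$ is bounded by the hypothesis at $0$, and $\varepsilon\le|t|\le R$, where a Riemann--Lebesgue argument gives decay in $T$.
\end{itemize}

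You instead pass to the primitive $\varphi=F-g(0)$. Its Laplace transform $h=(g-g(0))/z$ is continuous on the closed half plane precisely because of the two hypotheses. You then smooth with a kernel whose Fourier transform has compact support, and use boundedness of $f$ only as a Lipschitz (slow decrease) Tauberian condition in Step~3. This is the Ingham--Karamata argument in Korevaar's form. It avoids Cauchy's theorem and all contour shifting, and treats the point $0$ simply as part of the continuity of $h$ rather than by a separate cut-out. It also adapts without difficulty to $\mathrm{L}^1_{\mathrm{loc}}$ boundary values as in Theorem~\ref{thm:Laplace}.

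The contour version, on the other hand, yields directly the explicit bound in $R$ and $T$ of part b), with the $\|f\|_\infty/R$ term coming from the arcs. Making your version effective would require tracking the regularity of $h(iy)\hat k(\varepsilon y)$ on $[-\lambda/\varepsilon,\lambda/\varepsilon]$ and balancing $\varepsilon$ against $T$.

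One small slip: in Step~3 the first integral should run over all $s<T$, not only $|s|<T$. For $s\le -T$ you still have $T-s>0$, so the Lipschitz bound applies there too, and your estimate $M\varepsilon\int|s|k(s)\,ds$ is unchanged.
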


\section{Main results of this article}

The goal of the current work is to weaken the specific conditions on $g$ in Bekehermes' argumentation \cite{Tobias} 
even further and give an effective bound on the speed of convergence of the integral.\\

For locally integrable functions $f \colon [0,\infty) \to \C$ of exponential type, i.e.
$$
\exists C,T > 0, \ z_{0} \in \R \ : \ \left| f(t) \right| \ \leq \ C \mathrm{e}^{z_{0}t}
$$
for every $t \geq T$, the Laplace transform
$$
g(z)= \left( \mathcal{L}f \right) (z) = \int_0^\infty f(t) e^{-zt}\;dt
$$
is well defined on $\left\{ z \in \C \ : \  \Re z > z_{0} \right\}$.
Under the assumption of $f$ being bounded, i.e. $z_{0}=0$ and $T=0$, and additional assumptions on the 
Laplace transform $g$, Newman and Bekehermes implied an existence of
$g(0) = \int_0^\infty f(t) \;dt$.

\begin{Theo}
\label{thm:Laplace}
Let $f \colon [0, \infty) \rightarrow \C$ be bounded and locally integrable. Let the Laplace transform of $f$ be denoted by
$g = \mathcal{L}f$ on $\left\{ z \in \C \ : \  \Re z > 0 \right\}$ and let $g$ satisfy\\
\begin{enumerate}[i.)]
\item $\exists \ \delta_{1}, r, C > 0 : \left| g(z) \right| \leq C$ for almost every $z \in \C$ with $\Re z \in (0,\delta_{1})$ and 
$|\Im z| \geq r$ and
\item $\exists \ \delta_{2}, K > 0 : \left| g(z) \right| \leq K |z|$ for almost every $z \in \C$ with $\Re z > 0$ and $|z| \leq \delta_{2}$.\\
\end{enumerate}

Suppose there is a limit 
$\left\{ t \to \tilde{g}(\mathrm{i}t) \right\} \in \mathrm{L}^{1}\left( \R \right)$ 
such that $g(\varepsilon + \mathrm{i}t)$ converges to 
$\tilde{g}( \mathrm{i}t )$ for $\varepsilon \to 0^{+}$  in $\mathrm{L}_{loc}^{1}\left( \R  \right)$. 
Then the following implications hold: \\
\begin{enumerate}[a)]
\item The function $g$ is continuously extendable from $(0,\infty)$ to $0$ with
$$
g(0) = \int_{0}^{\infty} f(t) \;dt.
$$

\item 
Let $\tilde{g}$ or a representative of  $\tilde{g}$ in 
$\mathrm{L}^{1}\left( \mathbb{R} \right)$ be locally bounded and locally 
of finite $p$-variation both in $(0,\infty)$ and $(-\infty, 0)$ respectively.
For simplicity we treat $\tilde{g}$ as such a function and we simply write $\tilde{g}(t)$ 
instead of $\tilde{g}(\mathrm{i}t)$. Then for any $k \in (0, \frac{1}{2p})$ and sufficiently 
large $R$ and $T$ we obtain
\begin{align*}
& \left| \int_{0}^{T} f(t) \;dt - g(0) \right| & \\
& \ll  \frac{ \| f \|_{\infty}}{R} + \frac{1}{T^{k}} +  
\frac{1}{T^{\frac{1}{p}-2k}} \left( 
\left\| \tilde{g} \right\|_{\mathrm{L}^{\infty}\left( [T^{-k}, R] \right) }^{p} + \left( \frac{V_{p}(\tilde{g})}{T^{k}} \right)^{p}
\right)^{\frac{1}{p}} + \frac{ \left\| \tilde{g} \right\|_{\mathrm{L}^{\infty}( [T^{-k}, R] )} }{T^{1-k}} &  \\
& + \frac{1}{T^{\frac{1}{p}-2k}} \left( 
\left\| \tilde{g} \right\|_{\mathrm{L}^{\infty}\left( [-R, -T^{-k}] \right) }^{p} + \left( \frac{V_{p}(\tilde{g})}{T^{k}} \right)^{p}
\right)^{\frac{1}{p}} + \frac{ \left\| \tilde{g} \right\|_{\mathrm{L}^{\infty}( [-R, -T^{-k}] )} }{T^{1-k}} &  
\end{align*}
where the $p$-variation $V_{p}(\tilde{g})$ of $\tilde{g}$ also refers to the intervalls $[T^{-k}, R]$ and $[-R, -T^{-k}]$ respectively.
\end{enumerate}
\end{Theo}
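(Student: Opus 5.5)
The plan is to realise the truncated integral $F_T=\int_0^T f(t)\,dt$ as a Fourier-type integral of the boundary function $\tilde g$, and then to estimate that integral with an effective Riemann–Lebesgue lemma for functions of bounded $p$-variation. Start from $g_T(z)=\int_0^T f(t)e^{-zt}\,dt$, which is entire, and put $h(z)=g(z)-g_T(z)=\int_T^\infty f(t)e^{-zt}\,dt$ for $\Re z>0$. The quantity to control is $h(0)$, defined as $\lim g(\varepsilon)-F_T$.

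First we treat part a). Condition ii.) gives $g(\varepsilon)=O(\varepsilon)$. However, we want $g(0)=\int_0^\infty f$, so the argument must be Tauberian. We write $g(\varepsilon)-F_T$ by Fourier inversion on the vertical line $\Re z=\varepsilon$:
\[
h(\varepsilon)=\frac1{2\pi}\int_{\R} g(\varepsilon+iy)\,K_T(\varepsilon,y)\,dy + (\text{error}),
\]
where $K_T$ is the kernel obtained by multiplying by a Fejér/Newman-type weight $\bigl(1+z^2/R^2\bigr)$ and $e^{zT}$. Concretely, we imitate Newman's contour argument. We integrate $(g(z)-g_T(z))e^{zT}(1+z^2/R^2)\frac{dz}{z}$ over the boundary of $\{|z|\le R\}$ cut at $\Re z=\varepsilon$. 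On the left semicircle only the entire $g_T$ appears, and it contributes $\ll \|f\|_\infty/R$. On the right arc the standard Newman estimate also gives $\ll\|f\|_\infty/R$. What remains is the segment $\Re z=\varepsilon$, $|y|\le R$.

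We let $\varepsilon\to0^+$ on this segment using the $L^1_{loc}$ convergence $g(\varepsilon+iy)\to\tilde g(iy)$. Near $y=0$ the factor $1/z$ is harmless, because ii.) gives $|g(z)/z|\le K$. This yields a segment integral of $\tilde g(y)e^{iyT}(1-y^2/R^2)\,\frac{dy}{y}$ over $|y|\le R$. The same argument with $T$ fixed and the contour also closed around the point gives existence of the limit $g(0^+)$, and $g(0)=\lim F_T$ once we show the segment integral tends to $0$. Condition i.) is used to control $g$ on the cut vertical segments near $|y|=R$ uniformly in $\varepsilon$. For part a), the decay of the segment integral follows from the classical Riemann–Lebesgue lemma, since $\tilde g(y)/y\in L^1$ near $0$ by ii.) and $\tilde g\in L^1$.

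For part b), we split the segment integral into $|y|\le T^{-k}$ and $T^{-k}\le |y|\le R$, the latter on each side separately.
\begin{itemize}
\item On $|y|\le T^{-k}$ we bound trivially using $|\tilde g(y)/y|\le K$. This gives $\ll T^{-k}$.
\item On $[T^{-k},R]$ we need an effective Riemann–Lebesgue lemma. For $\phi$ of finite $p$-variation on an interval $I$ of length $L$, we aim for
\[
\Bigl|\int_I \phi(y)e^{iyT}\,dy\Bigr|\ll \frac{\|\phi\|_\infty}{T}+\frac{L^{1-1/p}}{T^{1/p}}\bigl(\|\phi\|_\infty^p+V_p(\phi)^p\bigr)^{1/p}.
\]
We prove this by cutting $I$ into $\sim LT$ periods of length $2\pi/T$ and replacing $\phi$ on each period by its value at an endpoint. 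On each period the error is at most $\frac1T\,\mathrm{osc}$. Hölder's inequality gives $\sum \mathrm{osc}\le N^{1-1/p}V_p$, and the incomplete last period contributes $\|\phi\|_\infty/T$.
\item We apply this to $\phi(y)=\tilde g(y)(1-y^2/R^2)/y$. On $[T^{-k},R]$ we have $\|\phi\|_\infty\le T^{k}\|\tilde g\|_\infty$. Multiplication by $1/y$ inflates the $p$-variation by roughly $T^{k}V_p(\tilde g)+T^{2k}\|\tilde g\|_\infty$. Choosing the bookkeeping to match the stated form then yields the terms $T^{-(1/p-2k)}(\|\tilde g\|^p+(V_p/T^k)^p)^{1/p}$ and $\|\tilde g\|_\infty/T^{1-k}$.
\end{itemize}
The condition $k<1/(2p)$ is exactly what makes $T^{-(1/p-2k)}\to0$.

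The main obstacle is the effective Riemann–Lebesgue step with correct exponents. In particular, the $1/y$ weight must be absorbed without losing the length factor in $R$, which appears to be implicitly treated as fixed in ``sufficiently large $R$''. A second obstacle is rigorously justifying the passage $\varepsilon\to0$ on the contour with only $L^1_{loc}$ convergence and ``almost every'' bounds. I would handle this by averaging the contour over $\Re z=\varepsilon$ and over the radius $R$ slightly, so that pointwise issues become integrated ones.
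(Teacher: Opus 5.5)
Your route is the paper's. You use Newman's kernel $e^{zT}(1/z+z/R^2)$, pass to $\Re z=0$ via the $L^1_{\mathrm{loc}}$ convergence together with $|g(z)/z|\le K$ from ii.), and apply the classical Riemann--Lebesgue lemma for a). For b) you split at $\pm T^{-k}$ and then use an effective Riemann--Lebesgue lemma for $p$-variation plus a product rule. Part a) is fine as you sketch it. Because you keep $g-g_T$ on the arc $|z|=R$, the Newman bound $2\|f\|_\infty/R^2$ on the integrand is uniform in $\varepsilon$, so you need neither i.) nor any averaging of contours. Also, $g$ is continuous on $\Re z>0$, so the ``almost every'' in i.), ii.) is automatic, and $|\tilde g(iy)|\le K|y|$ a.e.\ follows from an a.e.-convergent subsequence.

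The genuine gap is in b), exactly where you flag it. Your lemma applied on $I=[T^{-k},R]$ gives the stated main term multiplied by $R^{1-1/p}$. Treating $R$ as fixed does not prove the statement, because the bound is meant uniformly in $R$: otherwise the term $\|f\|_\infty/R$ is vacuous, and in the applications $R$ is a power of $T$.

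The missing idea is to exploit the decay of the weight $h(y)=1/y-y/R^2$ rather than its supremum over the whole interval. Cut $[T^{-k},R]$ dyadically into $I_j=[Y_j,2Y_j]$ with $Y_j=2^jT^{-k}$. On $I_j$ you have $\|h\|_{L^\infty(I_j)}\le Y_j^{-1}$ and $V_p(h;I_j)\le\int_{I_j}|h'|\le 2Y_j^{-1}$. From $|\Delta(\tilde gh)|\le\|\tilde g\|_\infty|\Delta h|+\|h\|_\infty|\Delta\tilde g|$ and Minkowski's inequality, $V_p(\tilde gh;I_j)\le Y_j^{-1}\bigl(2\|\tilde g\|_\infty+V_p(\tilde g)\bigr)$. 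Your lemma on an interval of length $Y_j$ then bounds the contribution of $I_j$ by $\ll (Y_jT)^{-1/p}\bigl(\|\tilde g\|_\infty+V_p(\tilde g)\bigr)+\|\tilde g\|_\infty/(Y_jT)$. Summing the geometric series gives
\[
\ll_p T^{-\frac{1-k}{p}}\bigl(\|\tilde g\|_{\infty}+V_p(\tilde g)\bigr)+\frac{\|\tilde g\|_{\infty}}{T^{1-k}}
\]
uniformly in $R$. Since $k/p\le k\le 2k$, this is dominated by the right-hand side of b).

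Likewise, do not bound $V_p(h)$ through $\|h'\|_\infty$ as in Lemma~\ref{Lem:products}. On an interval of length $L>1$ that costs an extra factor $L$, whereas $\int|h'|$ does not.

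For the record, the paper's own proof of b) applies Theorem~\ref{Hoelder_variation} and Lemma~\ref{Lem:products} directly on $[\varepsilon,R]$ as if it were $[0,1]$. It therefore silently drops exactly this factor. Your suspicion is justified, and the dyadic decomposition is what actually closes the step.
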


Remember that the $p$-variation of a function $f \colon [a, b] \to \C$ is defined as the supremum of
$
\sum_{k=1}^n |f(x_{k+1})-f(x_k)|^p
$ 
over all partitions $a\leq x_1<x_2<\dots <x_{n+1}\leq b$. Then $f$ is said to be of finite $p$-variance if this supremum is finite.
Please see the Appendix \ref{appendix} for further details.\\

Our proofs rely on some form of the Riemann-Lebesgue Lemma that for any continuous function 
$f \colon [0, 1] \rightarrow \R$ we have $ \lim_{x \rightarrow \infty} \int_0^1 f(t) e(xt)\;dt = 0$ where we write $e(t)$ 
for $e^{- 2\pi \mathrm{i} t}$. There are various strengthenings of this statement, see e.g. \cite{Kahane}. We need 
effective bounds for the speed of convergence, which we have not found in the literature. Hence we prove the following 
theorem in the Appendix \ref{appendix}.

\begin{Theo}
Let $f \colon [0,1] \rightarrow \R$ be bounded and admit a finite p-variation $V_{p}(f)$. Then 
$$
\left| \int_0^1 f(t) e(xt)\;dt \right| \ \leq \  V_{p}(f)\left( \frac{1}{x} \right)^{\frac{1}{p}}+ \frac{\|f\|_{L^{\infty}}}{x}
$$
holds for any $x > 0$.\\
\end{Theo}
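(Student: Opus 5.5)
Fix $x>0$ and write $e(t)=e^{-2\pi i t}$. The idea is to use the periodicity of $e(xt)$ with period $1/x$: we compare $f$ with its shifted copy $t\mapsto f(t+1/x)$, control the difference through the $p$-variation, and absorb the boundary leftovers into the $\|f\|_{L^\infty}/x$ term.

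\emph{Reduction to a difference integral.} Set $h=\frac{1}{2x}$, so that $e(x(t+h))=-e(xt)$. Assume first $h\le 1$. Substituting $t\mapsto t+h$ gives
$$
I:=\int_0^1 f(t)e(xt)\,dt=-\int_{-h}^{1-h} f(t+h)e(xt)\,dt .
$$
Averaging this with the original expression yields
$$
2I=\int_0^{1-h}\bigl(f(t)-f(t+h)\bigr)e(xt)\,dt+\int_{1-h}^1 f(t)e(xt)\,dt-\int_{-h}^{0}f(t+h)e(xt)\,dt .
$$
The two boundary integrals are each bounded by $h\|f\|_\infty$. Together they contribute $2h\|f\|_\infty=\|f\|_\infty/x$ to $2I$, so at most $\|f\|_\infty/(2x)$ to $|I|$. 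If instead $h>1$, i.e.\ $x<1/2$, the trivial bound $|I|\le\|f\|_\infty\le\|f\|_\infty/x$ is already enough.

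\emph{Bounding the difference integral via $p$-variation.} We need to control
$$
\int_0^{1-h}|f(t)-f(t+h)|\,dt=\int_0^h\sum_{j\ge0,\ s+(j+1)h\le1}|f(s+jh)-f(s+(j+1)h)|\,ds .
$$
For each fixed $s$, the points $s+jh$ form a partition of $[0,1]$ with $N\le 1/h=2x$ intervals. By Hölder's inequality,
$$
\sum_j|\Delta_j|\le N^{1-1/p}\Bigl(\sum_j|\Delta_j|^p\Bigr)^{1/p}\le (2x)^{1-1/p}\,V,
$$
where $V$ is $V_p(f)$, read as the $p$-th root of the supremum of sums of $p$-th powers. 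Integrating over $s\in[0,h]$ gives
$$
\int_0^{1-h}|f(t)-f(t+h)|\,dt\le h\,(2x)^{1-1/p}V=\tfrac12(2x)^{-1/p}V,
$$
which contributes at most $\tfrac14\cdot 2^{-1/p}\,V x^{-1/p}\le V x^{-1/p}$ to $|I|$.

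\emph{Normalization of $V_p$.} The main obstacle is this normalization. With the paper's literal definition, $V_p$ is the supremum of $\sum|\Delta|^p$ itself, and the Hölder step gives $V_p(f)^{1/p}x^{-1/p}$ rather than $V_p(f)x^{-1/p}$. I would handle this by reading $V_p(f)$ as the $p$-th root of that supremum, which is the usual convention and the one implicit in the bound as stated. Alternatively, for $V_p\ge1$ the literal version only strengthens the estimate, and the appendix presumably fixes the convention. The only other point needing care is measurability of $s\mapsto\sum_j|\Delta_j|$. This is harmless: a function of bounded $p$-variation is regulated, hence measurable, so the Fubini-type rearrangement above is justified.
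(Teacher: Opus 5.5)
Your argument is correct, and it takes a genuinely different route from the paper. The paper sets $n=\lfloor x\rfloor$ and discards $[n/x,1]$ at cost $\|f\|_{L^\infty}/x$. On each full period $[k/x,(k+1)/x)$ it replaces $f$ by the constant $f(k/n)$, which integrates to zero against $e(xt)$. It bounds the remaining $\int|f(t)-f(k/n)|\,dt$ by H\"older and a supremum over each period. It then controls the sum of these suprema by picking near-maximizing points $x_k$, interleaving them with the points $k/n$ into a partition, and letting $\varepsilon\to0$.

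You instead use the classical half-period shift $e(x(t+h))=-e(xt)$, which reduces the problem to the integrated modulus of continuity. Averaging over offsets $s\in[0,h)$ and applying discrete H\"older on each progression $s,s+h,\dots$ proves the clean estimate $\int_0^{1-h}|f(t+h)-f(t)|\,dt\le h^{1/p}V_p(f)$. Your route has three advantages. It needs only averaged oscillation rather than pointwise suprema. It dispenses with the near-supremum selection and the interleaved partition. That partition is not automatically increasing in the paper, since $x_k$ may fall to the left of $k/n$; it is rescued only because both points lie in $[k/x,(k+1)/x)$. Finally, your route gives better constants. The paper's route, in turn, is a direct step-function comparison on full periods, closer to the textbook proof of Riemann--Lebesgue via step functions.

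Two small corrections. First, $h(2x)^{1-1/p}=(2x)^{-1/p}$, not $\tfrac12(2x)^{-1/p}$. The variation term therefore contributes $2^{-1-1/p}V_p(f)x^{-1/p}$ to $|I|$. This is still at most $V_p(f)x^{-1/p}$, so your final bound $|I|\le 2^{-1-1/p}V_p(f)x^{-1/p}+\|f\|_{L^\infty}/(2x)$ for $x\ge 1/2$ stands. Second, the normalization worry does not arise. The appendix's definition of $V_p(f)$ takes the $p$-th root of the supremum, which is exactly the reading you used. Only the informal reminder in Section 2 omits the root.
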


One area of applications of Tauberian theorems is analytic number theory. Newman's theorem can be used to give a 
comparatively simple proof of the prime number theorem. Bekehermes used his theorem to give an analytic proof of 
the prime number theorem for arithmetic semigroups. An arithmetic semigroup is a commutative semigroup $(G,\cdot)$ 
with an identity element together with a multiplicative norm $\| \cdot \| \colon G \rightarrow [1, \infty)$ such that for 
every $x$ the quantity 
$$
N_{G}(x) =\left| \{ g\in G : \| g \|\leq x \} \right|
$$ 
is finite, the identity is the unique element satisfying $\|g\|=1$ and the decomposition into irreducible elements is unique.
The natural numbers $\N$ with the usual multiplication of two elements and the absolute value $| \cdot |$ is a simple 
example of an arithmetic semigroup. By $\pi_G(x)$ we denote the number of irreducible elements in $G$ of norm smaller 
or equal to $x$. Now, we use Theorem \ref{thm:Laplace} to prove the following.\\

\begin{Theo}
\label{thm:PNT}
Let $G$ be an aithmetic semigroup satisfying $N_{G}(x)=Ax+\mathcal{O}\left(\frac{x}{\log^\gamma x}\right)$ with $\gamma>2$
and $A \in \R$. Then we have
\[
\pi_G(x) = \frac{x}{\log x} + \mathcal{O}\left(\frac{x}{\log^{1+\delta} x}\right),
\]
where $\delta = \min\left(\frac{\gamma-2}{6}, \frac{1}{198}\right)$.\\
\end{Theo}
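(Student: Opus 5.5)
We apply Theorem \ref{thm:Laplace} part b) to a suitable function built from the Chebyshev function $\psi_G(x)=\sum_{\|g\|^m\le x,\ g \text{ irreducible}}\log\|g\|$. We then convert the resulting estimate back to $\pi_G$ by partial summation.

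We work with the zeta function $\zeta_G(s)=\sum_{g\in G}\|g\|^{-s}=\int_1^\infty x^{-s}\,dN_G(x)$ and normalize $A=1$; the constant $A$ has to be normalized, or else it is absorbed into the normalization of the norm, in order to get main term $x/\log x$. The hypothesis $N_G(x)=x+\mathcal{O}(x\log^{-\gamma}x)$ with $\gamma>2$ gives
\[
\zeta_G(s)=\frac{s}{s-1}+s\int_1^\infty E(x)x^{-s-1}\,dx,\qquad E(x)=\mathcal{O}(x\log^{-\gamma}x).
\]
The integral term, together with its first derivative, is continuous up to $\Re s=1$, because $\int_1^\infty \log^{j-\gamma}x\,\frac{dx}{x}$ converges for $j\le 1<\gamma-1$. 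The second derivative, however, is only H\"older-type regular. Next we show $\zeta_G(1+it)\neq 0$ for $t\neq0$ by the classical $3+4\cos+\cos 2$ argument. Its quantitative form needs $\zeta_G$ continuous on the line with a controlled modulus. From the available smoothness this yields $|\zeta_G(1+\sigma+it)|\gg |t|^{-c}$ for some fixed $c$, and this is where a constant like $198$ will enter.

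We set $f(t)=e^{-t}\psi_G(e^t)-1$, which is bounded by a Chebyshev-type upper bound derived from $N_G(x)\ll x$. Its Laplace transform is
\[
g(z)=-\frac{\zeta_G'}{\zeta_G}(1+z)\frac{1}{1+z}-\frac1z .
\]
We now verify the conditions of Theorem \ref{thm:Laplace}. For ii.), the pole cancels, and the derivative bound near $s=1$ gives $|g(z)|\le K|z|$ after subtracting the constant $g(0)$; possibly we must instead use $f$ minus a constant, which is harmless. For i.), boundedness for $|\Im z|\ge r$ and $\Re z$ small follows only after a truncation. In practice we mollify: we replace $f$ by a convolution, or multiply the Dirichlet series by a smooth cutoff, so that $\tilde g$ decays and lies in $L^1(\R)$. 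Then $\tilde g(it)=-\frac{\zeta_G'}{\zeta_G}(1+it)/(1+it)-1/(it)$, with $L^\infty$ norm on $[T^{-k},R]$ bounded by a power $R^{c'}$ and the $p$-variation bounded by a power of $R$. The latter uses that $\zeta_G'$ is H\"older continuous of some exponent $\alpha$ determined by $\gamma-2$, so that $\tilde g$ has finite $p$-variation with $p=1/\alpha$ on compact sets away from $0$.

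Plugging into part b) yields
\[
\int_0^T f\,dt \ll R^{-1}+T^{-k}+T^{-1/p+2k}R^{C}.
\]
We choose $R=T^{\eta}$ and $k$ to balance the terms, which gives $\int_0^T f(t)\,dt=\mathcal{O}(T^{-\delta'})$, i.e. $\int_1^x(\psi_G(u)-u)\,u^{-2}\,du\ll \log^{-\delta'}x$. A standard monotonicity (Tauberian differencing) argument, comparing over intervals $[x,x(1+\log^{-\delta}x)]$, transfers this to $\psi_G(x)=x+\mathcal{O}(x\log^{-\delta}x)$. Partial summation then gives $\pi_G(x)=x/\log x+\mathcal{O}(x\log^{-1-\delta}x)$, the contribution of prime powers being $\mathcal{O}(\sqrt x)$.

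The main obstacle is the explicit control of $\tilde g$ on $[T^{-k},R]$. This means lower bounds for $|\zeta_G(1+it)|$ polynomial in $|t|$ and upper bounds for $\zeta_G'$, obtained using only the weak $\log^{-\gamma}$ remainder, together with the bookkeeping of the exponents that produces $\delta=\min((\gamma-2)/6,1/198)$. I would first try the quantitative $3$-$4$-$1$ inequality with $\sigma=|t|^{-a}$, and optimize $a$, $\eta$, $k$, $p$ at the end.
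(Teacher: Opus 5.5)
Your architecture matches the paper's. You use the same $f(t)=e^{-t}\psi_G(e^t)-1$ with $g(z)=-\frac{\zeta_G'}{\zeta_G}(1+z)\frac{1}{1+z}-\frac{1}{z}$. You feed Theorem~\ref{thm:Laplace}\,b) a polynomial lower bound for $|\zeta_G(1+it)|$ and the $(\gamma-2)$-H\"older continuity of $\zeta_G'$. Then you use monotonicity differencing, which halves the exponent, and partial summation.

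The gap is the exponent, which is the real content of the statement. You never derive $\delta=\min(\frac{\gamma-2}{6},\frac{1}{198})$, and the bound you write cannot produce it. Take $R^{-1}+T^{-k}+T^{-1/p+2k}R^{C}$ with $R=T^{\eta}$, $1/p\le\gamma-2$ and fixed $C>0$. If $m$ is the smallest of the three exponents, then $\frac1p\ge m+2k+C\eta\ge(3+C)m$. So at best you get $T^{-(\gamma-2)/(3+C)}$, and after halving $\delta=\frac{\gamma-2}{2(3+C)}$. This is strictly below the claimed $\frac{\gamma-2}{6}$ for every $2<\gamma\le 2+\frac{1}{33}$.

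The paper gets the linear branch only from an error of the shape $(R^{66}/T)^{\gamma-2}+R^{34}/T+T^{-(\gamma-2)/3}+R^{-1}$. There the balance $k=\frac{1}{3p}$ carries no power of $R$, and $R$ enters only through $R^{66(\gamma-2)}$. Taking $R=T^{(\gamma-2)/(66(\gamma-2)+1)}$ gives $T^{-\min(\frac{\gamma-2}{3},\frac{1}{99})}$, and $198=2\cdot 99$. The $66$ is the exponent of the H\"older constant of $\zeta'/\zeta$, built from the Fej\'er-kernel bound $|\zeta(1+it)|\gg t^{-32}$. So $198$ is tied to that exponent, not to a $3$-$4$-$1$ step. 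The classical $3$-$4$-$1$ argument does suffice for $\gamma>2$ and gives a better power than $t^{-32}$, but that only changes $C$, not the defect above.

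Be aware that the paper's shape is not what Theorem~\ref{thm:Laplace}\,b) gives on its face. There $\|\tilde g\|_{\mathrm{L}^\infty([T^{-k},R])}$ multiplies $T^{-(1/p-2k)}$. With a H\"older constant of size $t^{66}$, the variation term is naturally $R^{66+(\gamma-2)}T^{-(\gamma-2)+k}$, not $(R^{66}/T)^{\gamma-2}$. Reaching the stated $\delta$ therefore requires justifying an estimate of that sharper shape; optimizing $a,\eta,k,p$ in your template will not do it.

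There are also three smaller errors.
\begin{itemize}
\item Boundedness of $f$ does not follow from $N_G(x)\ll x$. Adjoin to the ordinary primes $\lfloor\epsilon_k x_k\rfloor$ primes of norm $x_k$, with $\sum_k\epsilon_k<\infty$. This keeps $N_G(x)\ll x$, yet $\psi_G(x_k)/x_k\gg\epsilon_k\log x_k$ is unbounded, e.g.\ for $\epsilon_k=k^{-2}$, $x_k=e^{e^k}$. You need Diamond's Chebyshev estimate, which uses the remainder $\mathcal{O}(x\log^{-\gamma}x)$ with $\gamma>1$, exactly as the paper does.
\item With $H(s)=-\frac{\zeta'}{\zeta}(s)-\frac{1}{s-1}$ one has $g(0)=H(1)-1$, which is in general nonzero. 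So hypothesis ii.) fails as stated, a point the paper also passes over. Replacing $f$ by $f-c$ changes $g$ by $-c/z$, not by $-c$. Subtract $g(0)e^{-t}$ instead (transform $g(0)/(1+z)$). This leaves $\frac{H(1+z)-H(1)}{1+z}=\mathcal{O}(|z|)$, by Bekehermes' lemma or directly from your observation that the integral term of $\zeta_G$ has a bounded derivative up to $\Re s=1$.
\item No normalization of $A$ is needed, and none is possible by rescaling a multiplicative norm. $-\zeta_G'/\zeta_G$ has residue $1$ at $s=1$ for every $A>0$.
\end{itemize}

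Your mollification to secure hypothesis i.) and $\tilde g\in\mathrm{L}^1(\R)$, which the paper does not check, is a legitimate addition.
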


We did not bother with the pretty bad constant $1/198$, the important observation is that the exponent in the error term 
grows linearly with $\gamma-2$. In a similar way we can prove the following. Let $G$ be an arithmetic semigroup.  
Define the Möbiusfunction $\mu \colon G \rightarrow \{0, \pm 1\}$ by 
$$
\mu(g)= \left\{\begin{array}{cl} 
(-1)^{k} & \text{if $g$ is the product of $k$ different irreducible elements} \\
0 & \text{otherwise}
\end{array}\right.
$$
and $M_G(x)=\sum_{\|g\|\leq x}\mu_G(g)$. In the classical setting there is a simple and elementary proof that the 
prime number theorem is equivalent to the statement of $M_{G}(x)=o(x)$. In the abstract setting this equivalence 
does not hold, however.\\

\begin{Theo}
\label{thm:Moebius}
Let $G$ be an aithmetic semigroup satisfying $N_{G}(x)=Ax+\mathcal{O}\left(\frac{x}{\log^\gamma x}\right)$ 
with $\gamma>\frac{3}{2}$ and $A \in \R$. Then we have $M_{G}(x)\ll\frac{x}{\log^\delta x}$ with 
$\delta=\frac{(\gamma-\frac{3}{2})^2}{72}$.\\
\end{Theo}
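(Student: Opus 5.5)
We follow the scheme suggested for Theorem \ref{thm:PNT}, but apply Theorem \ref{thm:Laplace} to the Möbius function instead of to the von Mangoldt function. Write $\zeta_G(s)=\sum_{g\in G}\|g\|^{-s}=\int_1^\infty x^{-s}\,dN_G(x)$. The hypothesis $N_G(x)=Ax+\mathcal{O}(x/\log^\gamma x)$ gives
\[
\zeta_G(s)=\frac{As}{s-1}+s\int_1^\infty E(x)x^{-s-1}\,dx, \qquad E(x)=\mathcal{O}(x/\log^\gamma x),
\]
so $\zeta_G(s)-A/(s-1)$ extends to $\Re s\ge 1$. Writing $s=1+\sigma+it$, it is $\lfloor\gamma\rfloor-1$ times differentiable in $\sigma$ there, and it has Hölder-type regularity in $t$ on the line $\Re s=1$ of order $\min(\gamma-1,1)$. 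For $\gamma>3/2$ this exponent exceeds $1/2$, and that is where the threshold $3/2$ comes from.

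Set $f(u)=e^{-u}M_G(e^u)$. This is bounded, since $|M_G(x)|\le N_G(x)\ll x$. Its Laplace transform is
\[
g(z)=\frac{1}{(1+z)\,\zeta_G(1+z)},
\]
because $\sum\mu(g)\|g\|^{-s}=1/\zeta_G(s)$ by unique factorisation. We then check the hypotheses of Theorem \ref{thm:Laplace}.
\begin{enumerate}[1.)]
\item The pole of $\zeta_G$ at $s=1$ gives $|g(z)|\le K|z|$ near $0$, which is condition ii.).
\item For condition i.) and the boundary function $\tilde g(it)=1/((1+it)\zeta_G(1+it))$, we need $\zeta_G(1+it)\neq 0$ together with an explicit lower bound. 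We prove it with the Mertens trick $|\zeta_G(\sigma)^3\zeta_G(\sigma+it)^4\zeta_G(\sigma+2it)|\ge 1$. This forces $|\zeta_G(1+\sigma+it)|\gg \sigma^{3/4}|t|^{-c}$ up to the Hölder error, which we control by $|\zeta_G(1+it)-\zeta_G(1+\sigma+it)|\ll \sigma^{\gamma-1}(\log|t|)^{c}$. Choosing $\sigma$ optimally yields
\[
|\zeta_G(1+it)|^{-1}\ll |t|^{a}
\]
for a power $a$ depending on $\gamma-3/2$. This also gives the $\mathrm{L}^1_{loc}$ convergence of $g(\varepsilon+it)$ to $\tilde g$.
\item Since $\tilde g$ only grows polynomially, it is not in $\mathrm{L}^1(\R)$. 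We will therefore smooth $f$ first: replace $M_G$ by a short average, or equivalently multiply $g$ by a kernel such as $\left((e^{hz}-1)/(hz)\right)^2$. The resulting error is $\mathcal{O}(h)$ in $f$, and the kernel makes $\tilde g$ integrable. Alternatively we truncate at $|t|\le R$ and keep $R$ as a parameter in Theorem \ref{thm:Laplace} b).
\end{enumerate}

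To get a rate, we apply part b) of Theorem \ref{thm:Laplace}. This needs $p$-variation bounds for $\tilde g$ on $[T^{-k},R]$ and a sup bound there. Differences satisfy $|\zeta_G(1+it)-\zeta_G(1+it')|\ll |t-t'|^{\beta}$ with $\beta=\min(\gamma-1,1)-\epsilon$ (up to logarithms). Hence $1/\zeta_G$ has $p$-variation bounded by a power of $R$ with $p=1/\beta$, using the lower bound on $|\zeta_G|$. Near $t=0$ the factor $1/\zeta_G$ vanishes linearly, so the norms on $[T^{-k},R]$ grow at most polynomially in $R$. Bounding $\int_0^T f$ is not yet bounding $f(T)$ itself. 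We pass from the integral to the pointwise bound using $M_G(x)-M_G(y)\ll N_G(x)-N_G(y)$, which is a one-sided Tauberian step: average over $[T,T+h]$ and compare. Finally we choose $R$, $k$ and $h$ as powers of $T=\log x$ to balance the terms. Since the available exponent scales like $(\gamma-3/2)$ both in the lower bound for $\zeta_G$ and in the Hölder exponent, balancing should produce a saving of order $(\gamma-3/2)^2$, matching $\delta=(\gamma-\frac32)^2/72$.

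The main obstacle is step 2. We need a quantitative zero-free, lower-bound estimate for $\zeta_G(1+it)$ using only $\gamma>3/2$ regularity, where $\zeta_G$ is not analytic across the line and derivatives in $\sigma$ may not exist. We would first try the $3$-$4$-$1$ inequality combined with a Hölder estimate of order $\gamma-1>1/2$. This beats the $\sigma^{3/4}$ loss only in a restricted way, and that is exactly why $\gamma-3/2$ enters.
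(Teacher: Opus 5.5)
Your overall framework matches the paper's. You take $f(u)=e^{-u}M_G(e^u)$ with Laplace transform $1/((1+z)\zeta_G(1+z))$, get condition ii.) from the pole, convert Hölder regularity of $1/\zeta_G(1+it)$ into finite $p$-variation with $p=1/(\gamma-1)$, apply Theorem~\ref{thm:Laplace}~b) with $R$ a power of $T$, and pass to $M_G(x)$ via $|M_G(x)-M_G(y)|\le|N_G(x)-N_G(y)|$.

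The gap is in step 2, and it is not a technicality: the $3$-$4$-$1$ inequality cannot reach $\gamma>\frac32$. It gives $|\zeta_G(1+\sigma+it)|\gg\sigma^{3/4}|t|^{-c}$. The distance to the line is $|\zeta_G(1+it)-\zeta_G(1+\sigma+it)|\ll|t|\,\sigma^{\gamma-1}$, from $|\zeta_G'(x+it)|\ll 1+|t|(x-1)^{\gamma-2}$. The factor is $|t|$, not a power of $\log|t|$, because differentiating $s\int_1^\infty E(x)x^{-s-1}\,dx$ produces a factor $s$. Likewise your Hölder bound on the line really carries a factor $t^{2-\gamma}$ rather than logarithms; this is harmless but unjustified as stated. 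The main term beats the error for small $\sigma$ only if $\gamma-1>\frac34$, i.e.\ $\gamma>\frac74$. For $\frac32<\gamma\le\frac74$ no choice of $\sigma$ works, and you do not even obtain $\zeta_G(1+it)\neq0$. Comparing $\sigma^{\gamma-1}$ with $\sigma^{3/4}$ produces the quantity $\gamma-\frac74$, not $\gamma-\frac32$.

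The missing idea is to replace $3+4\cos\theta+\cos2\theta$ by a nonnegative cosine polynomial $1+\sum_{j=1}^n a_j\cos(j\theta)$ whose first coefficient is close to its maximal possible value $2$, e.g.\ the Fejér kernel $a_j=2(1-\frac{j}{n})$. Since $a_1\le 2$ always, $\frac32$ is exactly the natural threshold. From $a_1\log|\zeta_G(\sigma+it)|\ge-\log\zeta_G(\sigma)-\sum_{j\ge2}a_j\log|\zeta_G(\sigma+ijt)|$ one gets $|\zeta_G(\sigma+it)|\gg(\sigma-1)^{\frac12+\frac{2}{n-1}}(1+cn|t|^{1/\gamma})^{-n}$ (Lemma~\ref{Lem:Vallee Poussin}). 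So the $\sigma$-exponent tends to $\frac12$, at the price of a loss $|t|^{-O(n)}$.
\begin{itemize}
\item With $n\approx4/(\gamma-\frac32)$ this exponent lies at least $\frac12(\gamma-\frac32)$ below $\gamma-1$.
\item Beating $|t|(\sigma-1)^{\gamma-1}$ then forces $\sigma-1=|t|^{-O((\gamma-3/2)^{-2})}$, whence $|\zeta_G(1+it)|\gg|t|^{-8/(\gamma-\frac32)^2}$ (Lemma~\ref{Lem:inv bound}).
\end{itemize}
This is also the actual source of the quadratic dependence in $\delta$. It does not come, as your heuristic suggests, from both the lower bound and the Hölder exponent scaling like $\gamma-\frac32$. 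The Hölder exponent of $1/\zeta_G$ on the line stays $\gamma-1$, but its constant on $[t-1,t]$ is $t^{17/(\gamma-\frac32)^2}$ (Lemma~\ref{Lem:Moebius continuity}). Balancing $R^{17/(\gamma-\frac32)^2}T^{-(\gamma-1)}$ against $1/R$ yields a saving $T^{-c(\gamma-\frac32)^2}$. Your $3$-$4$-$1$ route, in the range $\gamma>\frac74$ where it works, would instead give a lower-bound exponent linear in $1/(\gamma-\frac74)$.
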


Both Theorem~\ref{thm:PNT} and \ref{thm:Moebius} are obtained by applying Theorem~\ref{thm:Laplace} to the 
zeta function $\zeta$ of the semigroup $G$ defined as $\zeta(s)=\sum_{g\in G} \frac{1}{\|g\|^s}$
for $s \in \mathbb{C}$ with $\Re s > 1$. 
Comparing Theorem~\ref{thm:PNT} and \ref{thm:Moebius} we see that while in the former the exponent in the 
error term grows linearly with $\gamma-2$, in the latter the exponent grows only quadratically in $\gamma-\frac{3}{2}$. 
The reason is that for the former we have to consider the boundary behaviour of $\frac{\zeta'}{\zeta}$ on the line 
$\{z:\Re z=1\}$, while for the latter we have to consider $\frac{1}{\zeta(s)}$. Now $\gamma>\frac{3}{2}$ implies that 
$\zeta(1+it)\neq 0$, while $\gamma>2$ implies that $\zeta'(s)$ has a continuous extension to the line $\{z:\Re z=1\}$. 
This leads to the fact that in Theorem~\ref{thm:PNT} the main contribution to the error term comes from large 
oscillations of $\zeta'(1+it)$, whereas for Theorem~\ref{thm:Moebius} the error term is dominated by possible 
values where $\zeta(1+it)$ is very small. The first problems can be solved reasonably well using partial summation, 
while the second requires more intricate arguments, therefore, the error term in the second case is 
worse.

\section{Proofs}

\subsection{Proof of Theorem \ref{thm:Laplace}}

In the following we present an effective convergence result for Laplace transforms at $z = 0$. For a better understanding we
devided the argumentation into several steps.\\
\begin{enumerate}[a)]
\item
\begin{enumerate}[i.)]
\item For $z \in \C$ with $\Re z > 0$ the Laplace transform $g$ of $f$ is given by
$$
g(z)=\int_0^\infty f(t) e^{-zt}\;dt
$$
since $f$ is bounded. Let $T>0$ be arbitrary but fixed and define 
$$
g_{T}(z)=\int_{0}^{T} f(t) e^{-tz}\;dt
$$
for any $z \in \C$. In particular we have $g_{T}(0) = \int_{0}^{T} f(t) \;dt$ for $z=0$. In the following we will prove that
$g$ is continuously extendable from $(0,\infty)$ to $0$ and that $g(0)$ is equal to the limit of $g_{T}(0)$ for $T \to \infty$.\\
\item Let $\sigma \in (0,1/2)$ and $R > \max (r, 1)$ be arbitrary but fixed. Choose any $\varepsilon \in (0, \sigma/2)$ and
define\\
\begin{enumerate}[a)]
\item $B_{R}^{+}(\varepsilon) = \partial B_{R}(\varepsilon) \cap \left\{  z \in \C \ : \ \Re z > \varepsilon \right\}$ and
\item $D_{R}(\varepsilon) = \left\{ \varepsilon + \mathrm{i}t \ : \ t \in [-R, R] \right\}$. \\
\end{enumerate}
Using Cauchy's theorem we conclude to
\begin{align*}
g(\sigma) & = \frac{1}{2\pi \mathrm{i}} 
\int_{D_{R}(\varepsilon) \cup B_{R}^{+}(\varepsilon)} \frac{g(z)}{z-\sigma} \;dz & \\
& = \frac{1}{2\pi \mathrm{i}}  \int_{ D_{R}(\varepsilon) \cup B_{R}^{+}(\varepsilon) }
g(z) \mathrm{e}^{T(z-\sigma)}\left(\frac{1}{z-\sigma}+\frac{z}{R^2}\right)\;dz &
\end{align*}
since $g$ is holomorphic in $\left\{ z \in \C \ : \ \Re z > 0 \right\}$. Note that for $z=\sigma$ 
we have  $e^{T(z-\sigma)}=1$, so the factor $e^{T(z-\sigma)}$ does not change the residuum.
Furthermore $zR^{-2}$ is an entire function, so adding this summand does not change the integral either. \\

\item Next we want to push the path of integration to $\varepsilon = 0$. Keep in mind that\\
\begin{align*}
& \frac{1}{2\pi \mathrm{i}} \int_{ D_{R}(\varepsilon) } g(z) \mathrm{e}^{T(z-\sigma)}\left( \frac{1}{z-\sigma}+\frac{z}{R^2} \right)\;dz
 & \\
& = \frac{1}{2\pi} \int_{-R}^{R} 
g(\varepsilon + \mathrm{i}t) \mathrm{e}^{T(\varepsilon - \sigma + \mathrm{i}t)} 
\left(  \frac{1}{\varepsilon - \sigma + \mathrm{i}t}+\frac{\varepsilon + \mathrm{i}t}{R^2} \right) \;dt & \\
\end{align*}
holds for the parametrization $\gamma_{\varepsilon}(t) = \varepsilon + \mathrm{i}t$ with 
$t \in [- R, R]$. For $\varepsilon \to 0^{+}$ the integral on the right hand side of the last equation converges to 
$$
 \frac{1}{2\pi} \int_{- R}^{R} \tilde{g}( \mathrm{i}t) \mathrm{e}^{T( - \sigma + \mathrm{i}t)} 
\left(  \frac{1}{ - \sigma + \mathrm{i}t}+\frac{\mathrm{i}t}{R^2} \right) \;dt
$$
since $g(\varepsilon + \mathrm{i}t)$ converges to 
$\tilde{g}( \mathrm{i}t)$ in $\mathrm{L}^{1}\left( [-R, R] \right)$ while the absolute values of the
remaining terms are uniformly bounded for $\varepsilon \in [0,\sigma/2)$ and $t \in [-R,R]$. Next, let us focus 
on the integral along $B_{R}^{+}(\varepsilon)$ with\\
 \begin{align*}
& \frac{1}{2\pi \mathrm{i}} \int_{B_{R}^{+}(\varepsilon)} g(z) \mathrm{e}^{T(z-\sigma)}\left( \frac{1}{z-\sigma}+\frac{z}{R^2} \right)\;dz
 & \\
& = \frac{R}{2\pi} \int_{\frac{3}{2}\pi}^{\frac{5}{2}\pi} 
g(\varepsilon + R\mathrm{e}^{\mathrm{i}t}) \mathrm{e}^{T(\varepsilon - \sigma + R\mathrm{e}^{\mathrm{i}t})} 
\left(  \frac{1}{\varepsilon - \sigma + R\mathrm{e}^{\mathrm{i}t}}+\frac{\varepsilon + R\mathrm{e}^{\mathrm{i}t}}{R^2} \right) 
\mathrm{e}^{\mathrm{i}t} \;dt & \\
\end{align*}
for the parametrization $\tilde{\gamma}_{\varepsilon}(t) = \varepsilon + R\mathrm{e}^{\mathrm{i}t}$ with 
$t \in [\frac{3}{2}\pi, \frac{5}{2}\pi]$. For $z \in B_{R}^{+}(\varepsilon)$ we argue that
$$
|z - \sigma| \geq \left| (\varepsilon + R) - \sigma \right| =  R - ( \sigma - \varepsilon ) > R - \sigma > R - \frac{1}{2} 
> 1 - \frac{1}{2} = \frac{1}{2}
$$
due to geometric arguments for $\sigma > \varepsilon$ being contained in $B_{R}(\varepsilon)$. For $z \in B_{R}^{+}(\varepsilon)$ with $\Re z > \delta_{1}$
we infer that
$$
\left| \frac{g(z)}{z - \sigma}\right| \leq 2 | g(z) | \leq \frac{ 2 \| f \|_{\infty}}{\Re z} \leq  \frac{ 2 \| f \|_{\infty}}{\delta_{1}}
$$
is true. Furthermore for $\varepsilon \in (0,\delta_{1})$ and $R$ sufficiently large we conclude that $z \in B_{R}^{+}(\varepsilon)$
with $\Re z \leq \delta_{1}$ always implies $\left| \Im z \right| \geq r$. In that case we conclude to 
$\left| g(z)(z - \sigma)^{-1} \right| \leq 2 C$. Hence for every $z \in B_{R}^{+}(\varepsilon)$ we infer that
$$
\left| \frac{g(z)}{z - \sigma}\right| \leq  \frac{ 2 \| f \|_{\infty}}{\delta_{1}} + 2C
$$
is true. Therefore we conclude to the convergence of
 \begin{align*}
&  \int_{\frac{3}{2}\pi}^{\frac{5}{2}\pi} 
g(\varepsilon + R\mathrm{e}^{\mathrm{i}t}) \mathrm{e}^{T(\varepsilon - \sigma + R\mathrm{e}^{\mathrm{i}t})} 
\left(  \frac{1}{\varepsilon - \sigma + R\mathrm{e}^{\mathrm{i}t}}+\frac{\varepsilon + R\mathrm{e}^{\mathrm{i}t}}{R^2} \right) 
\mathrm{e}^{\mathrm{i}t} \;dt & \\
& \to  
\int_{\frac{3}{2}\pi}^{\frac{5}{2}\pi} 
g( R\mathrm{e}^{\mathrm{i}t}) \mathrm{e}^{T( - \sigma + R\mathrm{e}^{\mathrm{i}t})} 
\left(  \frac{1}{ - \sigma + R\mathrm{e}^{\mathrm{i}t}}+\frac{\mathrm{e}^{\mathrm{i}t}}{R} \right) 
\mathrm{e}^{\mathrm{i}t} \;dt
\end{align*}
for $\varepsilon \to 0^{+}$ by the dominated convergence theorem. Note that the convergence of 
$g(\varepsilon + R\mathrm{e}^{\mathrm{i}t})$ to $g( R\mathrm{e}^{\mathrm{i}t})$ for $\varepsilon \to 0^{+}$
and $t \in (\frac{3}{2} \pi, \frac{5}{2} \pi)$ is not trivial to see. Again we argue with the dominated convergence theorem that
\begin{align*}
\left| g(\varepsilon + R\mathrm{e}^{\mathrm{i}t}) - g( R\mathrm{e}^{\mathrm{i}t})  \right|
& =  \left| \int_{0}^{\infty} f(s) \left( \mathrm{e}^{-\left( \varepsilon + R\mathrm{e}^{\mathrm{i}t} \right)s }
- \mathrm{e}^{-R\mathrm{e}^{\mathrm{i}t}s } \right) \;ds \right| & \\
& \leq \left\| f \right\|_{\infty} \ \int_{0}^{\infty} \mathrm{e}^{- sR \cos(t)} \left| \mathrm{e}^{-\varepsilon s} -1 \right|  \;ds
\to 0
\end{align*}
is implied for $\varepsilon \to 0^{+}$ since $\mathrm{e}^{- sR \cos(t)} \in \mathrm{L}^{1}\left( [0,\infty) \right)$ as $\cos(t) > 0$ 
is true for $t \in (\frac{3}{2} \pi, \frac{5}{2} \pi)$ and $ \left| \mathrm{e}^{-\varepsilon s} -1 \right| \leq 2$.
So, in total we end up with \\
$$
g(\sigma) = \frac{1}{2\pi \mathrm{i}} \left( 
 \int_{D_{R}}
\tilde{g}(z) \mathrm{e}^{T(z-\sigma)}\left(\frac{1}{z-\sigma}+\frac{z}{R^2}\right)\;dz \ + \
 \int_{B_{R}^{+}(0)} g(z) \mathrm{e}^{T(z-\sigma)}\left(\frac{1}{z-\sigma}+\frac{z}{R^2}\right)\;dz  \right)
$$
for $D_{R} = \{ \mathrm{i}t \ : \ t \in [-R, R]  \}$ and 
 $B_{R}^{+}(0) = \partial B_{R}(0) \cap \left\{  z \in \C \ : \ \Re z > 0 \right\}$.\\

\item This last equation holds for every $\sigma \in (0, 1/2)$ and sufficiently large $R$. Next we prove that $g$ is continuously extendable 
from $(0,\infty)$ to $0$. Again 
$$
\int_{B_{R}^{+}(0)} g(z) \mathrm{e}^{T(z-\sigma)}\left(\frac{1}{z-\sigma}+\frac{z}{R^2}\right)\;dz
\to \int_{B_{R}^{+}(0)} g(z) \mathrm{e}^{Tz}\left(\frac{1}{z}+\frac{z}{R^2}\right)\;dz
$$
for $\sigma \to 0^{+}$ is implied by the dominated convergence theorem as all terms are uniformly bounded
on $B_{R}^{+}(0)$. The argumentation refering to second integral along $D_{R}$ is more 
complicated. First we need to prove that
$$
\left| \tilde{g}(\mathrm{i}t) \right| \leq K |t|
$$ 
holds for almost every $t \in [-\delta_{2}, \delta_{2}]$. Therefore let 
$(\varepsilon_{n}) \subset (0,\infty)$ be a sequence with $\varepsilon_{n} \to 0$ for $n \to \infty$. Then
$g(\varepsilon_{n} + \mathrm{i}t)$ converges to $\tilde{g}(\mathrm{i}t)$ in $\mathrm{L}^{1}\left( [-\delta_{2}, \delta_{2}]\right)$
for $n \to \infty$. Hence there exists a subsequence $(\varepsilon_{n_{k}}) \subset (\varepsilon_{n})$ such that
$g(\varepsilon_{n_{k}} + \mathrm{i}t)$ converges to $\tilde{g}(\mathrm{i}t)$ pointwise almost everywhere in 
$[-\delta_{2}, \delta_{2}]$. Hence for almost every $t \in [-\delta_{2}, \delta_{2}]$ we conclude to
$$
\left| \tilde{g}( \mathrm{i}t ) \right| = \lim_{k \to \infty} \left| g(\varepsilon_{n_{k}} + \mathrm{i}t) \right| \leq K |t|.
$$
So for $t$ close to $0$ we conclude to 
$$
\frac{\left| \tilde{g}(\mathrm{i}t) \right|}{\left| \mathrm{i}t - \sigma \right|} \ \leq \ K \frac{|t|}{|t|} = K.
$$
Using the dominated convergence theorem we infer that
$$
\int_{D_{R}} \tilde{g}(z) \mathrm{e}^{T(z-\sigma)}\left(\frac{1}{z-\sigma}+\frac{z}{R^2}\right)\;dz
\to \int_{D_{R}} \tilde{g}(z) \mathrm{e}^{Tz}\left(\frac{1}{z}+\frac{z}{R^2}\right)\;dz
$$
is implied by $\sigma \to 0^{+}$. Therefore $g$ is continuously extendable from $(0,\infty)$ to $0$ satisfying
$$
g(0) = \frac{1}{2\pi \mathrm{i}} \left(  \int_{D_{R}}
\tilde{g}(z) \mathrm{e}^{Tz}\left(\frac{1}{z}+\frac{z}{R^2}\right)\;dz \ + \  
\int_{B_{R}^{+}(0)} g(z) \mathrm{e}^{Tz}\left(\frac{1}{z}+\frac{z}{R^2}\right)\;dz \right).
$$

\item Furthermore 
$$
g_{T}(0) = \frac{1}{2\pi \mathrm{i}} \int_{B_{R}(0)}
g_{T}(z) e^{Tz} \left( \frac{1}{z}+\frac{z}{R^2} \right) \;dz
$$
is implied by Cauchy's theorem since $g_{T}$ is an entire function. Both
characterizations of $g$ and $g_{T}$ share the common path of integration 
$B_{R}^{+}(0)$ and hence we have
\begin{eqnarray*}
\left| \int_{0}^{T} f(t) \;dt - g(0)  \right| & = & \left| g_{T}(0) - g(0) \right| \\
 & \leq & \left|  \frac{1}{2\pi \mathrm{i}} \int_{ B_{R}^{+}(0) } ( g_{T}(z) - g(z) ) e^{Tz}\left(\frac{1}{z}+\frac{z}{R^2}\right)\;dz \right| \\
 & & + \left|  \frac{1}{2\pi \mathrm{i}}  \int_{ D_{R} } \tilde{g}(z) e^{Tz} \left( \frac{1}{z}+\frac{z}{R^2} \right) \;dz \right| \\
& & + \left|  \frac{1}{2\pi \mathrm{i}}  \int_{ B_{R}^{-}(0) } g_T(z) e^{Tz}\left(\frac{1}{z}+\frac{z}{R^2}\right)\;dz \right| 
\end{eqnarray*}
with $B_{R}^{-}(0) = \partial B_{R}(0) \cap \left\{ z \in \C \ : \ \Re z < 0 \right\}$. Now we estimate each of 
these integrals separately. If $|z|=R$, then 
$$
\frac{1}{z}+\frac{z}{R^2} = \frac{1}{z}+\frac{z}{|z|^2} = \frac{\overline{z} + z}{|z|^2} = \frac{2 \Re z}{|z|^2}
$$
is true. Furthermore for $z \in B_{R}^{+}(0)$ with $\Re z > 0$ we infer that
$$
| g_{T}(z) - g(z) | = 
\left| \int_{T}^{\infty} f(t)e^{-zt} \;dt \right| \leq 
\| f \|_{\infty} \int_{T}^{\infty} e^{-t \Re z} \;dt = \frac{ \| f \|_{\infty} }{ \Re z } e^{- T \Re z}
$$
is true. Using this bound we obtain \\
\begin{align*}
& \left| \int_{ B_{R}^{+}(0) } ( g_{T}(z) - g(z) ) e^{Tz}\left(\frac{1}{z}+\frac{z}{R^2}\right)\;dz \right| & \\
& & \\
& \leq  \int_{ B_{R}^{+}(0) }\frac{ \| f \|_{\infty} }{ \Re z } e^{- T \Re z} e^{T \Re z} \frac{2\Re z}{|z|^2} \;dz
 = \| f \|_{\infty} \int_{ B_{R}^{+}(0) } \frac{dz}{|z|^2} \ll \frac{ \| f \|_{\infty} }{R}. \\
\end{align*}
For $z \in B_{R}^{-}(0)$ with $\Re z < 0$ we infer that
$$
\left| g_{T}(z) \right| \leq 
\| f \|_{\infty} \int_{0}^{T} \mathrm{e}^{-t \Re z}  \;dt = \frac{ \| f \|_{\infty} }{ - \Re z} \left( \mathrm{e}^{T(- \Re z)} - 1 \right)
\leq \frac{ \| f \|_{\infty} }{ - \Re z} \ \mathrm{e}^{T(- \Re z)} 
$$
is true. Therefore we obtain \\
\begin{align*}
& \left| \int_{ B_{R}^{-}(0) } g_{T}(z) e^{Tz}\left(\frac{1}{z}+\frac{z}{R^2}\right)\;dz \right| & \\
& & \\
& \leq  \int_{ B_{R}^{-}(0) }\frac{ \| f \|_{\infty} }{ -\Re z } e^{ - T \Re z} e^{T \Re z} \frac{2 ( - \Re z)}{|z|^2} \;dz
 = \| f \|_{\infty} \int_{ B_{R}^{-}(0) } \frac{dz}{|z|^2} \ll \frac{ \| f \|_{\infty} }{R}. \\
\end{align*}
Let us consider the integral along $D_{R}$ next. For $\varepsilon \in (0, \delta_{2})$ arbitrary but fixed we write
\begin{align*}
& \left| \int_{-R}^{R} \tilde{g}(\mathrm{i}t) \mathrm{e}^{\mathrm{i}tT} 
\left( \frac{1}{\mathrm{i}t} + \frac{\mathrm{i}t}{R^{2}} \right) \;dt \right| \leq & \\
& & \\
& \left| \int_{\varepsilon}^{R} \tilde{g}(\mathrm{i}t) \left( \frac{1}{\mathrm{i}t} + \frac{\mathrm{i}t}{R^{2}} \right) 
\mathrm{e}^{\mathrm{i}tT}  \;dt \right|  + 
\int_{-\varepsilon}^{\varepsilon} \left|  \tilde{g}(\mathrm{i}t) \left( \frac{1}{\mathrm{i}t} + \frac{\mathrm{i}t}{R^{2}} \right) \right|  \;dt  
+ \left| \int_{- R}^{- \varepsilon} \tilde{g}(\mathrm{i}t) \left( \frac{1}{\mathrm{i}t} + \frac{\mathrm{i}t}{R^{2}} \right) 
\mathrm{e}^{\mathrm{i}tT}  \;dt \right|.  & \\
\end{align*}
For $\varepsilon$ being smaller than $\delta_{2}$ we infer that 
$\left\{ t \to  \tilde{g}(\mathrm{i}t) \left( \frac{1}{\mathrm{i}t} + \frac{\mathrm{i}t}{R^{2}} \right) \right\}$
is bounded in $[-\delta_{2}, \delta_{2}]$. Hence 
$$
\int_{-\varepsilon}^{\varepsilon}  
\left|  \tilde{g}(\mathrm{i}t) \left( \frac{1}{\mathrm{i}t} + \frac{\mathrm{i}t}{R^{2}} \right) \right|  \;dt \ll \varepsilon
$$
is implied. Next we focus on 
$$
\int_{\varepsilon}^{R} \tilde{g}(\mathrm{i}t) \left( \frac{1}{\mathrm{i}t} + \frac{\mathrm{i}t}{R^{2}} \right) 
\mathrm{e}^{\mathrm{i}tT}  \;dt.
$$
Note that $\left\{ t \to  \tilde{g}(\mathrm{i}t) \left( \frac{1}{\mathrm{i}t} + \frac{\mathrm{i}t}{R^{2}} \right) \right\}$
is contained in $\mathrm{L}^{1}\left( [\varepsilon, R] \right)$ since the term 
$\left( \frac{1}{\mathrm{i}t} + \frac{\mathrm{i}t}{R^{2}} \right)$ 
is bounded in $[\varepsilon, R]$. Then there exists $g_{R}$ a step function 
on $[\varepsilon, R]$ such that
$$
\int_{\varepsilon}^{R} \left|  \tilde{g}(\mathrm{i}t) \left( \frac{1}{\mathrm{i}t} + \frac{\mathrm{i}t}{R^{2}} \right) - g_{R}(t) \right| \;dt 
\leq \frac{1}{R}
$$
is true since the set of step functions is a dense sub-space of $\mathrm{L}^{1}\left( [\varepsilon, R] \right)$.
We use $g_{R}$ to infer that
\begin{align*}
& \left| \int_{\varepsilon}^{R} \tilde{g}(\mathrm{i}t) \left( \frac{1}{\mathrm{i}t} + \frac{\mathrm{i}t}{R^{2}} \right) 
\mathrm{e}^{\mathrm{i}tT}  \;dt \right| & \\
& \leq \int_{\varepsilon}^{R} \left|  \tilde{g}(\mathrm{i}t) \left( \frac{1}{\mathrm{i}t} + \frac{\mathrm{i}t}{R^{2}} \right) 
- g_{R}(t)  \right| \;dt + \left| \int_{\varepsilon}^{R} g_{R}(t) \mathrm{e}^{\mathrm{i}tT}  \;dt \right| & \\
& \leq \frac{1}{R} +  \left| \int_{\varepsilon}^{R} g_{R}(t) \mathrm{e}^{\mathrm{i}tT}  \;dt \right| & 
\end{align*}
is true. Remember that step functions always admit a finite $p$-variation for any $p \geq 1$.\\

\item We use Theorem \ref{Hoelder_variation} on $g_{R}$ next. Therefore we have to make some minor 
adjustments first. Note that
$$
\int_{\varepsilon}^{R} g_{R}(t) \mathrm{e}^{\mathrm{i}tT}  \;dt = 
(R - \varepsilon)  \mathrm{e}^{\mathrm{i} \varepsilon T}
\int_{0}^{1} g_{R}( (R - \varepsilon) r + \varepsilon) \mathrm{e}^{\mathrm{i} (R - \varepsilon) T r}  \;dr
$$ 
is true. The function $\left\{ r \to g_{R}( (R - \varepsilon) r + \varepsilon) \right\}$ is a step function on $[0,1]$ and 
is of finite $p$-variation for any $p \geq 1$. For simplicity we choose $p=1$ and conclude to 
\begin{align*}
\left| \int_{\varepsilon}^{R} g_{R}(t) \mathrm{e}^{\mathrm{i}tT}  \;dt \right| & = (R - \varepsilon) 
\left| \int_{0}^{1} g_{R}( (R - \varepsilon) r + \varepsilon) \mathrm{e}^{\mathrm{i} (R - \varepsilon) T r}  \;dr \right| & \\
& \leq (R - \varepsilon) \left(  \frac{ V_{p}( g_{R} ) }{(R - \varepsilon) T} + 
\frac{ \| g_{R} \|_{\mathrm{L}^{\infty}([\varepsilon, R])} }{ (R - \varepsilon) T }  \right) & \\ 
& = \frac{1}{T} \left( V_{p}(g_{R}) + \| g_{R} \|_{\mathrm{L}^{\infty}([\varepsilon, R])} \right). & 
\end{align*}
In total we conclude to
$$
\left| \int_{\varepsilon}^{R} \tilde{g}(\mathrm{i}t) \left( \frac{1}{\mathrm{i}t} + \frac{\mathrm{i}t}{R^{2}} \right) 
\mathrm{e}^{\mathrm{i}tT}  \;dt \right| \leq \frac{1}{R} + 
\frac{1}{T} \left( V_{p}(g_{R}) + \| g_{R} \|_{\mathrm{L}^{\infty}([\varepsilon, R])} \right).
$$
Similar arguments are used for 
$\int_{- R}^{- \varepsilon} \tilde{g}(\mathrm{i}t) \left( \frac{1}{\mathrm{i}t} + \frac{\mathrm{i}t}{R^{2}} \right) 
\mathrm{e}^{\mathrm{i}tT}  \;dt$.\\

\item For $\varepsilon \in (0, \delta_{2})$ we summarize our arguments to \\
\begin{align*}
& \left| \int_{0}^{T} f(t) \;dt - g(0) \right| & \\
& & \\
& \leq  \frac{C_{1} \| f \|_{\infty}}{R} + 
\left|  \frac{1}{2\pi \mathrm{i}}  \int_{ D_{R} } \tilde{g}(z) e^{Tz} \left( \frac{1}{z}+\frac{z}{R^2} \right) \;dz \right| & \\ 
& & \\
& \leq   \frac{C_{1} \| f \|_{\infty}}{R} + 
\left| \int_{\varepsilon}^{R} \tilde{g}(\mathrm{i}t) \left( \frac{1}{\mathrm{i}t} + \frac{\mathrm{i}t}{R^{2}} \right) 
\mathrm{e}^{\mathrm{i}tT}  \;dt \right|  + 
\int_{-\varepsilon}^{\varepsilon} \left|  \tilde{g}(\mathrm{i}t) \left( \frac{1}{\mathrm{i}t} + \frac{\mathrm{i}t}{R^{2}} \right) \right|  \;dt  
+ \left| \int_{- R}^{- \varepsilon} \tilde{g}(\mathrm{i}t) \left( \frac{1}{\mathrm{i}t} + \frac{\mathrm{i}t}{R^{2}} \right) 
\mathrm{e}^{\mathrm{i}tT}  \;dt \right| & \\
& & \\
& \leq   \frac{C_{1} \| f \|_{\infty}}{R} + C_{2} \varepsilon +  
\int_{\varepsilon}^{R} \left|  \tilde{g}(\mathrm{i}t) \left( \frac{1}{\mathrm{i}t} + \frac{\mathrm{i}t}{R^{2}} \right) 
- g_{R}(t)  \right| \;dt + \left| \int_{\varepsilon}^{R} g_{R}(t) \mathrm{e}^{\mathrm{i}tT}  \;dt \right| 
+ \left| \int_{- R}^{- \varepsilon} \tilde{g}(\mathrm{i}t) \left( \frac{1}{\mathrm{i}t} + \frac{\mathrm{i}t}{R^{2}} \right) 
\mathrm{e}^{\mathrm{i}tT}  \;dt \right| & \\
& & \\
& \leq \frac{C_{1} \| f \|_{\infty}}{R} +  C_{2} \varepsilon +  \frac{1}{R} + 
\frac{1}{T} \left( V_{p}(g_{R}) + \| g_{R} \|_{\mathrm{L}^{\infty}([\varepsilon, R])} \right) 
+ \left| \int_{- R}^{- \varepsilon} \tilde{g}(\mathrm{i}t) \left( \frac{1}{\mathrm{i}t} + \frac{\mathrm{i}t}{R^{2}} \right) 
\mathrm{e}^{\mathrm{i}tT}  \;dt \right| & \\
\end{align*}
for constants $C_{1}, C_{2} > 0$. Finally we have 
$$
\left| \int_{0}^{T} f(t) \;dt - g(0) \right| \leq \frac{C_{1} \| f \|_{\infty}}{R} + C_{2} \varepsilon +  \frac{2}{R} + 
\frac{2}{T} \left( V_{p}(g_{R}) + \| g_{R} \|_{\infty} \right)
$$
where we used the argumentation in vi.) on 
$\int_{- R}^{- \varepsilon} \tilde{g}(\mathrm{i}t) \left( \frac{1}{\mathrm{i}t} + \frac{\mathrm{i}t}{R^{2}} \right) 
\mathrm{e}^{\mathrm{i}tT}  \;dt$ as well. We choose $R= R(\varepsilon)$ sufficiently large such that 
$$
\frac{C_{1} \| f \|_{\infty}}{R} + \frac{2}{R} < \varepsilon
$$
is true. Then we choose $T > 0$ depending on $R$ and $\varepsilon$ such that 
$$
\frac{2}{T} \left( V_{p}(g_{R}) + \| g_{R} \|_{\infty} \right) < \varepsilon
$$ 
which finally implies $g(0) = \lim_{T \to \infty} \int_{0}^{T} f(t) \;dt$.\\
\end{enumerate}

\item 
Let $\tilde{g}$ itself or a representative of  $\tilde{g}$ in 
$\mathrm{L}^{1}\left( \mathbb{R} \right)$ be locally bounded and locally 
of finite $p$-variation both in $(0,\infty)$ and $(-\infty, 0)$ respectively. For simplicity we treat 
$\tilde{g}$ as such a function. For sufficiently small but fixed $\varepsilon > 0$ we argue similar to a) that 
\begin{align*}
& \left| \int_{0}^{T} f(t) \;dt - g(0) \right| & \\
& & \\
& \leq  \frac{C_{1} \| f \|_{\infty}}{R} + 
\left|  \frac{1}{2\pi \mathrm{i}}  \int_{ D_{R} } \tilde{g}(z) e^{Tz} \left( \frac{1}{z}+\frac{z}{R^2} \right) \;dz \right| & \\ 
& & \\
& \leq   \frac{C_{1} \| f \|_{\infty}}{R} + 
\left| \int_{\varepsilon}^{R} \tilde{g}(\mathrm{i}t) \left( \frac{1}{\mathrm{i}t} + \frac{\mathrm{i}t}{R^{2}} \right) 
\mathrm{e}^{\mathrm{i}tT}  \;dt \right|  + 
\int_{-\varepsilon}^{\varepsilon} \left|  \tilde{g}(\mathrm{i}t) \left( \frac{1}{\mathrm{i}t} + \frac{\mathrm{i}t}{R^{2}} \right) \right|  \;dt  
+ \left| \int_{- R}^{- \varepsilon} \tilde{g}(\mathrm{i}t) \left( \frac{1}{\mathrm{i}t} + \frac{\mathrm{i}t}{R^{2}} \right) 
\mathrm{e}^{\mathrm{i}tT}  \;dt \right| & \\
& & \\
& \leq   \frac{C_{1} \| f \|_{\infty}}{R} + C_{2} \varepsilon +
\left| \int_{\varepsilon}^{R} \tilde{g}(\mathrm{i}t) \left( \frac{1}{\mathrm{i}t} + \frac{\mathrm{i}t}{R^{2}} \right) 
\mathrm{e}^{\mathrm{i}tT}  \;dt \right| 
+ \left| \int_{- R}^{- \varepsilon} \tilde{g}(\mathrm{i}t) \left( \frac{1}{\mathrm{i}t} + \frac{\mathrm{i}t}{R^{2}} \right) 
\mathrm{e}^{\mathrm{i}tT}  \;dt \right| & 
\end{align*}
is true. We define $h(t) = \frac{1}{\mathrm{i}t} + \frac{\mathrm{i}t}{R^{2}}$ for $t \in \left[ \varepsilon, R \right]$ and use
Theorem \ref{Hoelder_variation} in combination with Lemma \ref{Lem:products} for the term
$$
\left| \int_{\varepsilon}^{R} \tilde{g}(\mathrm{i}t) \left( \frac{1}{\mathrm{i}t} + \frac{\mathrm{i}t}{R^{2}} \right) 
\mathrm{e}^{\mathrm{i}tT}  \;dt \right| = 
\left| \int_{\varepsilon}^{R} \tilde{g}(\mathrm{i}t) h(t) \mathrm{e}^{\mathrm{i}tT}  \;dt \right|.
$$
We infer that 
$$
\| h \|_{\mathrm{L}^{\infty} \left( [\varepsilon, R] \right)} \leq \left( \frac{1}{R} + \frac{1}{\varepsilon} \right) 
\leq \frac{2}{\varepsilon}
$$
holds for $R$ being sufficiently large. Similarily we argue that 
$$
\| h^{\prime} \|_{\mathrm{L}^{\infty}\left( [\varepsilon, R]\right)} \leq \frac{2}{\varepsilon^{2}}
$$ 
is true for large $R$. Now we use Theorem \ref{Hoelder_variation} to conclude to
\begin{align*}
\left| \int_{\varepsilon}^{R} \tilde{g}(\mathrm{i}t) \left( \frac{1}{\mathrm{i}t} + \frac{\mathrm{i}t}{R^{2}} \right) 
\mathrm{e}^{\mathrm{i}tT}  \;dt \right| & = 
\left| \int_{\varepsilon}^{R} \tilde{g}(\mathrm{i}t) h(t) \mathrm{e}^{\mathrm{i}tT}  \;dt \right| & \\
& & \\
& \leq \frac{1}{T^{\frac{1}{p}}} V_{p}(\tilde{g}h) + \frac{1}{T} \left\| \tilde{g} \right\|_{\mathrm{L}^{\infty}([\varepsilon, R])} 
\left\| h \right\|_{\mathrm{L}^{\infty}([\varepsilon, R])} & \\
& & \\
& \leq \frac{1}{T^{\frac{1}{p}}} V_{p}(\tilde{g}h) + \frac{2}{T \varepsilon}  \left\| \tilde{g} \right\|_{\mathrm{L}^{\infty}([\varepsilon, R])}. 
\end{align*}
Using Lemma \ref{Lem:products} we argue further that
\begin{align*}
V_{p}(\tilde{g}h) & \leq 2 \left( \left\| \tilde{g} \right\|_{\mathrm{L}^{\infty}\left( [\varepsilon, R]\right) }^{p} 
\left\| h^{\prime} \right\|_{\mathrm{L}^{\infty}\left( [\varepsilon, R]\right) }^{p} +
\left\| h \right\|_{\mathrm{L}^{\infty}\left( [\varepsilon, R]\right) }^{p} V_{p}(\tilde{g})^{p}
\right)^{\frac{1}{p}} & \\
& \leq 2 \left( \left\| \tilde{g} \right\|_{\mathrm{L}^{\infty}\left( [\varepsilon, R]\right) }^{p} 
\frac{2^{p}}{\varepsilon^{2p}} + \frac{2^{p}}{\varepsilon^{p}} V_{p}(\tilde{g})^{p} \right)^{\frac{1}{p}} & \\
& \leq \frac{4}{\varepsilon^{2}} 
\left( 
\left\| \tilde{g} \right\|_{\mathrm{L}^{\infty}([\varepsilon, R])}^{p} + \varepsilon^{p}  V_{p}(\tilde{g})^{p}
\right)^{\frac{1}{p}} &
\end{align*}
is true which gives 
$$
\left| \int_{\varepsilon}^{R} \tilde{g}(\mathrm{i}t) \left( \frac{1}{\mathrm{i}t} + \frac{\mathrm{i}t}{R^{2}} \right) 
\mathrm{e}^{\mathrm{i}tT}  \;dt \right|
\leq \frac{4}{T^{\frac{1}{p}} \varepsilon^{2}} \left( 
\left\| \tilde{g} \right\|_{\mathrm{L}^{\infty}([\varepsilon, R])}^{p} + \varepsilon^{p}  V_{p}(\tilde{g})^{p}
\right)^{\frac{1}{p}} + \frac{2}{T \varepsilon}  \left\| \tilde{g} \right\|_{\mathrm{L}^{\infty}([\varepsilon, R])}. 
$$
For a number $k \in (0, \frac{1}{2p})$ and large $T$ we choose $\varepsilon = T^{-k}$ to conclude to
$$
\left| \int_{T^{-k}}^{R} \tilde{g}(\mathrm{i}t) \left( \frac{1}{\mathrm{i}t} + \frac{\mathrm{i}t}{R^{2}} \right) 
\mathrm{e}^{\mathrm{i}tT}  \;dt \right| \leq 
\frac{4}{T^{\frac{1}{p}-2k}} \left( 
\left\| \tilde{g} \right\|_{\mathrm{L}^{\infty}([T^{-k}, R])}^{p} + \left( \frac{V_{p}(\tilde{g})}{T^{k}} \right)^{p}
\right)^{\frac{1}{p}} + \frac{2}{T^{1-k}}  \left\| \tilde{g} \right\|_{\mathrm{L}^{\infty}([T^{-k}, R])}.
$$ 
Please keep in mind that the bounded $p$-variation $ V_{p}(\tilde{g})$ of $\tilde{g}$ depends on $R$ and $T$ as well. 
The term 
$$
\left| \int_{-R}^{-T^{-k}} \tilde{g}(\mathrm{i}t) \left( \frac{1}{\mathrm{i}t} + \frac{\mathrm{i}t}{R^{2}} \right) 
\mathrm{e}^{\mathrm{i}tT}  \;dt \right|
$$
is treated similarily and therefore we have shown the claimed inequality.
\end{enumerate}

\subsection{Boundary behaviour of zeta functions of arithmetic semigroups}

Let $G$ be an arithmetic semigroup satisfying $N_G(x)=Ax+\mathcal{O}\left(\frac{x}{\log^\gamma x}\right)$ for some constant 
$A \in \mathbb{R}$. Let $\zeta_G(s)=\sum_{g\in G}\|g\|^{-s}$ be the associated zetafunction. To apply Theorem~\ref{thm:Laplace} we have to show that $\frac{\zeta'_G}{\zeta_G}$ or related functions behaves well on the line $1+it$. We do so by adapting Diamond's proof showing that under suitable conditions we have $\zeta(1+it)\neq 0$.\\

\begin{Lem}
\label{Lem:derivative}
Suppose that $G$ is an arithmetic semigroup satisfying 
$$
N_{G}(x)= Ax+\mathcal{O}\left(\frac{x}{\log^{\gamma} x}\right)
$$ 
with $\gamma>1$ and $A \in \mathbb{R}$. Then the following implications hold: \\
\begin{enumerate}
\item If $\gamma>1$ is fixed, then $|\zeta(\sigma+it)|\ll 1+\frac{1}{|\sigma+it-1|}+|t|$ holds for all $\sigma\geq 1$.
\ \\
\item If $1\leq \gamma<2$ is fixed, then
$|\zeta(\sigma+it)'| \ll 1 + t(\sigma-1)^{\gamma-2}$.
\ \\
\item If $\gamma=2$, then we have
$|\zeta(\sigma+it)'| \ll 1 + t|\log(\sigma-1)|$.
\ \\
\item If $1\leq\gamma< 3$ is fixed, then
$|\zeta(\sigma+it)''| \ll 1 + t(\sigma-1)^{\gamma-2}$. 
\ \\
\item If $\gamma=3$, then we have
$|\zeta(\sigma+it)''| \ll 1 + t|\log(\sigma-1)|$. \\
\end{enumerate}
\end{Lem}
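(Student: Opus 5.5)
The plan is to write $\zeta(s)$ and its derivatives as Stieltjes integrals against $N_G$, split off the main term $Ax$, and estimate the error term by partial integration. Put $N_G(x)=Ax+E(x)$ with $E(x)\ll x\log^{-\gamma}x$ for $x\geq 2$, and $E$ bounded on $[1,2]$. For $\Re s>1$ we have
\[
\zeta(s)=\int_{1^-}^\infty x^{-s}\,dN_G(x)=\frac{A}{s-1}+A+\int_{1^-}^\infty x^{-s}\,dE(x).
\]
Integrating by parts (the boundary term at $\infty$ vanishes for $\sigma>1$) gives
\[
\int_{1^-}^\infty x^{-s}\,dE(x)=-E(1^-)+s\int_1^\infty E(x)x^{-s-1}\,dx .
\]
Since $|E(x)x^{-s-1}|\ll x^{-\sigma}\log^{-\gamma}x\le x^{-1}\log^{-\gamma}x$ and $\gamma>1$, the last integral is bounded uniformly in $\sigma\geq 1$. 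This yields $|\zeta(s)|\ll 1+|s-1|^{-1}+|s|\ll 1+|s-1|^{-1}+|t|$, which is (1).

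For the derivatives, differentiate this representation $m$ times ($m=1,2$). The main term $A/(s-1)+A$ contributes $O(|s-1|^{-m-1})$. Strictly speaking, that is not $\ll 1+|t|(\sigma-1)^{\gamma-2}$ near $s=1$. I will therefore read (2)–(5) as statements about the part $\zeta(s)-\frac{A}{s-1}$, or as statements for $|t|\geq 1$, where the main term is harmless. The remainder is
\[
\frac{d^m}{ds^m}\Bigl(s\int_1^\infty E(x)x^{-s-1}dx\Bigr)
=(-1)^m s\int_1^\infty E(x)(\log x)^m x^{-s-1}dx+m(-1)^{m-1}\int_1^\infty E(x)(\log x)^{m-1}x^{-s-1}dx .
\]
The second integral is bounded by $\int^\infty x^{-1}(\log x)^{m-1-\gamma}dx$. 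For $m=1$ this is finite since $\gamma>1$. For $m=2$ it converges only when $\gamma>2$; when $\gamma\le2$ it is dominated by the first term's estimate anyway.

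The key computation is the first term, which is bounded by
\[
|s|\int_1^\infty (\log x)^{m-\gamma}x^{-\sigma}\,dx=|s|\int_0^\infty u^{m-\gamma}e^{-(\sigma-1)u}\,du .
\]
I would split this at $u=1$. The range $u\le1$ gives $O(1)$, provided $m-\gamma>-1$, which holds. For the range $u\geq1$ there are three cases:
\begin{itemize}
\item if $m-\gamma>-1$, substitute $v=(\sigma-1)u$ to get $\Gamma(m-\gamma+1)(\sigma-1)^{\gamma-m-1}$;
\item if $m-\gamma=-1$, one gets $\int_1^\infty u^{-1}e^{-(\sigma-1)u}du\ll 1+|\log(\sigma-1)|$;
\item if $m-\gamma<-1$, the integral is bounded.
\end{itemize}
With $m=1$ this gives exponent $\gamma-2$ for $\gamma<2$ and a logarithm at $\gamma=2$, i.e. (2) and (3). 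With $m=2$ it gives exponent $\gamma-3$ and a logarithm at $\gamma=3$, i.e. (5). For (4), the exponent I obtain is $(\sigma-1)^{\gamma-3}$. I expect the stated $(\sigma-1)^{\gamma-2}$ there is a typo for $\gamma-3$, and I would prove the $\gamma-3$ version, which is the natural one. Finally, $|s|\ll 1+|t|$ for bounded $\sigma$; for large $\sigma$ everything is trivially bounded.

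The main obstacle is bookkeeping rather than ideas. First, I need to justify differentiating under the integral and the vanishing boundary terms for $\sigma>1$, which follows from absolute convergence. Second, I need to handle the small-$x$ range where $\log x$ is near $0$, which the split at $u=1$ with $E$ bounded takes care of. Third, I need to state clearly how the pole term $A/(s-1)$ is treated in (2)–(5).
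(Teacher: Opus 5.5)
Your proposal is correct and is essentially the paper's argument: partial integration against $N_G(x)=Ax+E(x)$, differentiation under the integral, the substitutions $u=\log x$ and $v=(\sigma-1)u$, and an incomplete Gamma bound. The only difference is in the logarithmic cases, where you bound $\int_{\sigma-1}^{\infty}v^{-1}e^{-v}\,dv\ll 1+|\log(\sigma-1)|$ directly, whereas the paper reruns the $\gamma<2$ estimate with a $\sigma$-dependent exponent $\gamma^{\ast}$, $2-\gamma^{\ast}=1/|\log(\sigma-1)|$, and uses the explicit factor $1+\frac{1}{2-\gamma}$. (In these cases $m-\gamma>-1$ fails, so the range $u\le 1$ genuinely needs the boundedness of $E$ on $[1,e]$ that you invoke at the end.) Both of your caveats are justified: the paper's own remark that the second derivative ``differs only by one power of $\log x$'' gives $(\sigma-1)^{\gamma-3}$ in (4), and the paper silently absorbs the derivative of the main term $As/(s-1)$ into the ``$1+$'' and replaces $|s|$ by $t$, so (2)--(5) hold for $|t|\geq 1$, which is how they are applied later.
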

\begin{proof}
By partial integration we have
\begin{multline*}
\zeta(\sigma+it) = (\sigma+it)\int_1^\infty \frac{N_{G}(x)}{x^{\sigma+it-1}}\; dx 
= (\sigma+it)\int_1^\infty \frac{Ax + \mathcal{O}\left(\frac{x}{\log^\gamma x}\right)}{x^{\sigma+it-1}}\; dx\\
 = \frac{A(\sigma+it)}{\sigma+it -1} + \mathcal{O}\left((1+|t|)\int_1^\infty\frac{dx}{x^\sigma \log^\gamma x}\right) \ll \frac{1}{\sigma+it -1} + 1+|t|.
\end{multline*}
Similarly we have
\begin{eqnarray*}
|\zeta(\sigma+it)'| & \ll &1+ \left|\int_e^\infty x^{-s}\frac{N_{G}(x)-Ax}{x} (|\sigma+it|\log x-1)\;dx\right|\\
& \ll & 1 + \left|\int_e^\infty \frac{x^{-\sigma}}{\log^\gamma x} (|\sigma+it|\log x-1)\;dx\right|\\
& \ll &  1 + t\left|\int_e^\infty \frac{x^{-\sigma}}{\log^{\gamma-1} x} \;dx\right|\\
 & \ll &  1 + t\left|\int_1^\infty e^{-(\sigma-1)u} u^{1-\gamma}\;du \right|\\
  & \ll & 1 + t(\sigma-1)^{\gamma-2}\left|\int_{\sigma-1}^\infty e^{-v} v^{1-\gamma}\;dv \right|\\
  & \ll & 1 + t(\sigma-1)^{\gamma-2} \left(1+ \frac{1}{2-\gamma}\right).
\end{eqnarray*}
If $\gamma<2$ is fixed, our claim follows. In the case of $\gamma=2$, we define 
$$
\gamma^{\ast}=2-\frac{1}{\log(\sigma-1)}.
$$
Then $N_{G}(x)= Ax+\mathcal{O}\left(\frac{x}{\log^{\gamma^{\ast}} x}\right)$ is implied by $\gamma > \gamma^{\ast}$. Hence
we use the argumentation above on $\gamma^{\ast}$ in place of $\gamma$.\\
Furthermore the proof for the second derivative differs only by one power of $\log x$.
\end{proof}

\begin{Lem}
Suppose that $G$ is an arithmetic semigroup satisfying 
$$
N_{G}(x)=Ax+\mathcal{O}\left(\frac{x}{\log^{\gamma} x}\right)
$$ 
with $1<\gamma\leq 2$ and $A \in \mathbb{R}$. Then the following implications hold: \\
\begin{enumerate}
\item If $\gamma<2$ is fixed, then $\zeta(1+it)\ll t^{1/\gamma}$ holds for $t>1$. 
\ \\
\item For $\gamma=2$ we have $\zeta(1+it) \ll \sqrt{t\log t}$.
\end{enumerate}
\end{Lem}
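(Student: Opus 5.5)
We bound $\zeta(1+it)$ by truncating the Dirichlet series and using the hypothesis on $N_G$ only on the tail. Write $s=1+it$ with $t>1$ and fix a cut-off $X\ge e$, chosen later as a power of $t$. We split
$$
\zeta(s)=\sum_{\|g\|\le X}\|g\|^{-s}+\sum_{\|g\|> X}\|g\|^{-s}.
$$
Strictly, the series does not converge absolutely at $\Re s=1$. So we work at $\sigma+it$ with $\sigma>1$, get bounds uniform in $\sigma$, and let $\sigma\to1^+$; this is legitimate because the proof of Lemma~\ref{Lem:derivative} exhibits $\zeta$ as a continuous function up to the line $\sigma=1$ away from $s=1$.

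For the head we take absolute values and use partial summation with $N_G(x)\ll x$:
$$
\sum_{\|g\|\le X}\|g\|^{-\sigma}\le \int_{1^-}^{X}x^{-1}\,dN_G(x)\ll \log X .
$$
For the tail we use Stieltjes integration against $N_G(x)=Ax+E(x)$, where $E(x)\ll x\log^{-\gamma}x$. This gives
$$
\int_X^\infty x^{-s}\,dN_G(x)= A\int_X^\infty x^{-s}\,dx - E(X)X^{-s} + s\int_X^\infty E(x)x^{-s-1}\,dx .
$$
We bound the three terms in turn:
\begin{itemize}
\item The first term equals $AX^{1-s}/(s-1)$, which is $\ll 1/|t|\ll 1$.
\item The boundary term is $\ll \log^{-\gamma}X$.
\item The last term is bounded by
$$
|s|\int_X^\infty \frac{dx}{x\log^\gamma x}\ll \frac{|t|}{(\gamma-1)\log^{\gamma-1}X}.
$$
\end{itemize}
Altogether, uniformly in $\sigma\ge1$,
$$
|\zeta(\sigma+it)|\ll \log X+ 1+\frac{t}{\log^{\gamma-1}X}.
$$

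It remains to optimise in $X$. Put $L=\log X$ and balance $L$ against $t/L^{\gamma-1}$. This gives $L=t^{1/\gamma}$, that is $X=\exp(t^{1/\gamma})$, and hence $\zeta(1+it)\ll t^{1/\gamma}$ for fixed $\gamma<2$. That is statement (1); the implied constant depends on $\gamma$ through the factor $1/(\gamma-1)$.

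For $\gamma=2$ the same choice gives only $\sqrt t$, which is stronger than the claimed $\sqrt{t\log t}$. So either the claim is deliberately generous, or I am missing a loss. The step I expect to need the most care is the tail estimate, specifically the uniformity in $\sigma$ and the treatment of the $O$-term when $X$ is as large as $\exp(\sqrt t)$. Still, the hypothesis $E(x)\ll x\log^{-\gamma}x$ holds for all large $x$, so the argument above already yields $\sqrt t\le\sqrt{t\log t}$. If the authors intend a weaker or averaged hypothesis at $\gamma=2$, I would mimic the $\gamma^{\ast}$ trick from Lemma~\ref{Lem:derivative}. Concretely, replace $\gamma=2$ by $\gamma^\ast=2-1/\log t$; then $1/(\gamma^\ast-1)\asymp1$ and $t^{1/\gamma^\ast}\ll\sqrt t$, which again stays within the stated bound $\sqrt{t\log t}$.
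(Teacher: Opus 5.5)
Your argument is correct, but it reaches the bound by a different route from the paper. The paper never truncates the series. It uses only the trivial bound $|\zeta(\sigma+it)|\le\zeta(\sigma)\ll 1/(\sigma-1)$ and writes $\zeta(1+it)=\zeta(\sigma+it)-\int_1^\sigma\zeta'(x+it)\,dx$. It then inserts the derivative bounds of Lemma~\ref{Lem:derivative} to get $\frac{1}{\sigma-1}+t(\sigma-1)^{\gamma-1}$ and takes $\sigma-1=t^{-1/\gamma}$. Your $\log X$ plays exactly the role of the paper's $1/(\sigma-1)$.

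The paper's version is very short once Lemma~\ref{Lem:derivative} is available. It also uses the same horizontal-integration device that reappears in Lemmas~\ref{Lem:inv bound} and~\ref{Lem:line bound}. Your version is self-contained, needing only $N_G(x)\ll x$ and one partial summation on the tail. It handles the passage to $\sigma=1$ explicitly, and it is sharper at $\gamma=2$.

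You are not missing a loss at $\gamma=2$; the extra $\sqrt{\log t}$ in the paper comes from its method. At $\gamma=2$, Lemma~\ref{Lem:derivative} only gives $|\zeta'(x+it)|\ll 1+t|\log(x-1)|$. So the paper must balance $\frac{1}{\sigma-1}$ against $t(\sigma-1)\log\frac{1}{\sigma-1}$. Your sharp cut-off instead balances $\log X$ against $t/\log X$, with no logarithm. Thus your argument gives $\zeta(1+it)\ll t^{1/\gamma}$ for every $1<\gamma\le 2$, which contains both parts of the lemma, and your $\gamma^\ast$ fallback is not needed.

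One detail deserves a sentence: the hypothesis $E(x)\ll x\log^{-\gamma}x$ is only asserted for large $x$. Since $E$ is locally bounded, it extends to all $x\ge e$ with a larger constant, so your tail estimate is valid for every $X\ge e$.
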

\begin{proof}
We have $|\zeta(s)| = \left|\sum_{g\in G}\frac{1}{\|g\|^s}\right| \leq \sum_{g\in G}\frac{1}{\|g\|^\sigma} \ll \frac{1}{\sigma-1}$. We now apply Lemma~\ref{Lem:derivative} to obtain
\begin{align*}
|\zeta(1+it)| & = \left|\zeta(\sigma+it) - \int_1^\sigma \zeta'(x+it)\;dx \right| \\
& \ll \frac{1}{\sigma-1} + t\int_1^\sigma (x-1)^{\gamma-2}\;dx \ll \frac{1}{\sigma-1} + t(\sigma-1)^{\gamma-1}.
\end{align*}
Choosing $\sigma-1=t^{-1/\gamma}$ our first claim follows. Similarly for $\gamma=2$ we have
\[
|\zeta(1+it)| \ll \frac{1}{\sigma-1} + t(\sigma-1)\log\frac{1}{\sigma-1},
\]
and the second claim follows by taking $\sigma-1=\frac{1}{\sqrt{t\log t}}$.
\end{proof}

\begin{Lem}
\label{Lem:Vallee Poussin}
Suppose that $G$ is an arithmetic semigroup satisfying 
$$
N_{G}(x) = Ax+\mathcal{O}\left(\frac{x}{\log^{\gamma} x}\right)
$$ 
with $\frac{3}{2}<\gamma<2$ and $A \in \mathbb{R}$. Then there exists a constant $c$ such that we 
have for $\sigma>1$ and an integer $n$ the estimate
\[
\zeta(\sigma+it) \gg (\sigma-1)^{\frac{1}{2}+\frac{2}{n-1}} (1+cn|t|^{1/\gamma})^{-n}
\]
\end{Lem}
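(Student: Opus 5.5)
We follow the classical de la Vallée Poussin idea, replacing the fixed trigonometric polynomial $3+4\cos\theta+\cos 2\theta$ by a family of nonnegative cosine polynomials of degree $n$ with constant term close to half the total mass.

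\emph{Step 1 (the polynomial).} Take a Fejér-type kernel, e.g.
$$P_n(\theta)=\sum_{|j|\le n}\Bigl(1-\tfrac{|j|}{n+1}\Bigr)e^{ij\theta}\ge 0,$$
written as $P_n(\theta)=a_0+\sum_{j=1}^n a_j\cos(j\theta)$ with $a_j\ge 0$ and $a_0=1$. Then $\sum_{j\ge1}a_j=n$. After normalizing, the ratio $a_0/(a_0+a_1)$ behaves like $\frac12-O(1/n)$. This is the source of the exponent $\frac12+\frac{2}{n-1}$. I would pick the precise kernel (possibly a squared Fejér kernel of degree $n$) so that $a_1/a_0\le \frac12+\frac{2}{n-1}$ exactly.

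\emph{Step 2 (the inequality).} By unique factorization, $\log\zeta(s)=\sum_{p}\sum_m \frac{1}{m\|p\|^{ms}}$ has nonnegative coefficients. Hence for $\sigma>1$,
$$\sum_j a_j\log|\zeta(\sigma+ijt)|=\sum b(g)\|g\|^{-\sigma}P_n(\theta_g)\ge 0,$$
so that $|\zeta(\sigma)|^{a_0}\,|\zeta(\sigma+it)|^{a_1}\prod_{j\ge2}|\zeta(\sigma+ijt)|^{a_j}\ge 1$. Solving for $|\zeta(\sigma+it)|$ gives
$$|\zeta(\sigma+it)|\ge |\zeta(\sigma)|^{-a_0/a_1}\prod_{j\ge2}|\zeta(\sigma+ijt)|^{-a_j/a_1}.$$

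\emph{Step 3 (upper bounds on the factors).} We have $|\zeta(\sigma)|\ll (\sigma-1)^{-1}$, which yields the factor $(\sigma-1)^{a_0/a_1}$ with $a_0/a_1\le \frac12+\frac2{n-1}$ for $\sigma-1$ small. For $j\ge2$ we need $|\zeta(\sigma+ijt)|\ll 1+c\,j|t|^{1/\gamma}\le 1+cn|t|^{1/\gamma}$, uniformly in $\sigma\ge1$. The estimate on the line $\sigma=1$ is the earlier lemma ($\zeta(1+it)\ll t^{1/\gamma}$). For $\sigma>1$ I would repeat its proof: interpolate $\zeta(\sigma+iu)=\zeta(\sigma'+iu)-\int_\sigma^{\sigma'}\zeta'$ via Lemma~\ref{Lem:derivative}, or simply note that the trivial bound $(\sigma-1)^{-1}$ wins once $\sigma-1\ge |u|^{-1/\gamma}$. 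Using $|ju|^{1/\gamma}\le j|u|^{1/\gamma}$ and $\sum_{j\ge2}a_j/a_1\le n$ (after normalization), the product is $\gg(1+cn|t|^{1/\gamma})^{-n}$.

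\emph{Main obstacle.} For small $|t|$ the factors with $j\ge2$ are near the pole, since $|\zeta(\sigma+ijt)|$ can be as large as $1/|\sigma-1+ijt|$ and not $O(1)$. For $|t|$ bounded below this is absorbed into the constant. For $|t|$ tiny I would instead argue directly: $\zeta(s)\sim A/(s-1)$ near $s=1$ by part (1) of Lemma~\ref{Lem:derivative} when $A\neq0$, so $|\zeta(s)|$ is large there and the claimed lower bound holds trivially. Choosing the kernel so that the exponent bookkeeping matches $\frac12+\frac{2}{n-1}$ exactly is the delicate combinatorial point; I expect to try the Fejér kernel first and adjust if the ratio $a_0/a_1$ is off by a constant factor.
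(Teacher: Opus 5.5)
Your proposal is correct and is essentially the paper's argument: the de la Vallée Poussin inequality with a non-negative Fejér cosine polynomial, the bound $\zeta(\sigma)\ll(\sigma-1)^{-1}$, and the bound $|\zeta(\sigma+iu)|\ll 1+|u|^{1/\gamma}$ for the shifted factors (the paper uses $1+2\sum_{j\le n}(1-\frac{j}{n})\cos jx$, which is your kernel with $n$ shifted by one). No adjustment of the kernel is needed: for your $P_n$ one has $a_1=\frac{2n}{n+1}$, hence $a_0/a_1=\frac12+\frac{1}{2n}\le\frac12+\frac{2}{n-1}$ and $\sum_{j\ge2}a_j/a_1=\frac{n-1}{2}\le n$, while your separate treatment of small $|t|$ and your restriction to bounded $\sigma-1$ cover points that the paper's proof passes over.
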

\begin{proof}
Let $\varphi(x) = 1+\sum_{j=1}^n a_j\cos(jx)$ be a non-negative cosine polynomial. Then we have for $\sigma>1$
\begin{equation}
\label{eq:Vallee Poussin}
\log\zeta(\sigma)+a_1\log|\zeta(\sigma+it)| + \sum_{j=2}^n a_j \log|\zeta(\sigma+ijt)| \geq 0.
\end{equation}
By partical summation we have $\zeta(\sigma)=\frac{1}{\sigma-1}+\mathcal{O}(1)$. We now use Lemma~\ref{Lem:derivative} to obtain
\[
\log|\zeta(\sigma+it)| \geq -\frac{1}{a_1}\log(\sigma-1) + \sum_{j=2}^n |a_j|(\log (1+c(j|t|)^{1/\gamma}).
\]
We now choose $\varphi$ to be the Fejer-kernel
\[
\varphi(x) = 1 + 2\sum_{j=1}^n\left(1-\frac{j}{n}\right)\cos jx = \frac{1}{n}\left(\frac{\sin\frac{nx}{2}}{\sin\frac{x}{2}}\right)^2
\]
and obtain
\[
\log|\zeta(\sigma+it)| \geq -\frac{n}{2(n-1)}\log(\sigma-1) + n \log (1+cn|t|^{1/\gamma}).
\]
\end{proof}

\begin{Lem}
\label{Lem:inv bound}
Suppose that $G$ is an arithmetic semigroup satisfying 
$$
N_{G}(x)=Ax+\mathcal{O}\left(\frac{x}{\log^{\gamma} x}\right)
$$ 
with $\frac{3}{2}<\gamma\leq 2$ and $A \in \mathbb{R}$. Then we have
\[
|\zeta(1+it)|\gg t^{-\frac{8}{\left(\gamma-\frac{3}{2}\right)^2}}.
\]
\end{Lem}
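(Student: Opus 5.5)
The plan is the standard "lower bound off the line, then transfer to the line via the derivative" argument. Lemma~\ref{Lem:Vallee Poussin} gives a lower bound for $|\zeta(\sigma+it)|$ at $\sigma>1$. Lemma~\ref{Lem:derivative} controls $\zeta'$ between $\sigma$ and $1$. Writing
\[
\zeta(1+it)=\zeta(\sigma+it)-\int_1^\sigma \zeta'(x+it)\,dx ,
\]
we get
\[
|\zeta(1+it)|\geq |\zeta(\sigma+it)| - C\,t\int_1^\sigma (x-1)^{\gamma-2}\,dx \geq |\zeta(\sigma+it)| - C' t(\sigma-1)^{\gamma-1}.
\]
This holds for $\frac32<\gamma<2$. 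For $\gamma=2$ I would replace $\gamma$ by some slightly smaller $\gamma'\in(\frac32,2)$, since the hypothesis with $\gamma=2$ implies it for every smaller exponent, and the final exponent is monotone enough to absorb this. Alternatively one can use the logarithmic bound of Lemma~\ref{Lem:derivative}(3).

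The lower bound from Lemma~\ref{Lem:Vallee Poussin} reads
\[
|\zeta(\sigma+it)|\gg (\sigma-1)^{\frac12+\frac{2}{n-1}}\,(c n t^{1/\gamma})^{-n}
\]
for $t\geq 1$. Set $\eta=\sigma-1$ and $\beta=\frac12+\frac{2}{n-1}$. We need the main term to dominate, i.e.
\[
\eta^{\beta}(cnt^{1/\gamma})^{-n}\geq 2C' t\,\eta^{\gamma-1}.
\]
Since $\gamma-1>\frac12$, we can choose $n$ so large that $\gamma-1-\beta>0$. Concretely, take $\frac{2}{n-1}\approx\frac12(\gamma-\frac32)$, so that
\[
\gamma-1-\beta \geq \tfrac12\left(\gamma-\tfrac32\right) \quad\text{and}\quad n\asymp \frac{4}{\gamma-\frac32}.
\]
The condition then becomes $\eta^{\gamma-1-\beta}\leq (2C')^{-1} t^{-1}(cnt^{1/\gamma})^{-n}$. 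We solve it with
\[
\eta = \left(c_1 t^{1+n/\gamma}\right)^{-1/(\gamma-1-\beta)} .
\]
Plugging back in gives $|\zeta(1+it)|\gg \eta^{\beta}t^{-n/\gamma}$. Its $t$-exponent is
\[
-\frac{\beta(1+n/\gamma)}{\gamma-1-\beta}-\frac{n}{\gamma}.
\]

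The remaining work is to show this exponent is at least $-\frac{8}{(\gamma-\frac32)^2}$. With $n\approx 4/(\gamma-\frac32)$, the first term is roughly
\[
\frac{\beta n/\gamma}{\tfrac12(\gamma-\tfrac32)}\approx \frac{\tfrac12\cdot\tfrac{4}{\gamma-3/2}}{\tfrac12(\gamma-\tfrac32)}\cdot\frac{1}{\gamma} = \frac{4}{\gamma(\gamma-\tfrac32)^2}\leq \frac{8}{3(\gamma-\tfrac32)^2},
\]
using $\gamma>\frac32$. The other contributions are of lower order, $O\big((\gamma-\frac32)^{-1}\big)$, and $(\gamma-\frac32)^{-1}\leq 2(\gamma-\frac32)^{-2}$ because $\gamma-\frac32\leq\frac12$. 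So the total stays below $8/(\gamma-\frac32)^2$.

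I expect this bookkeeping to be the main obstacle: the choice of $n$ must be an integer, and the $\gamma$-dependence of the implied constants (for instance the factor $\frac{1}{2-\gamma}$ in Lemma~\ref{Lem:derivative}) has to be handled so that it can be absorbed into the $\gg$. I would choose $n=\lceil 8/(\gamma-\frac32)\rceil+1$, which is safe, and then check the inequality on the exponent crudely term by term. Finally, for $t$ in a bounded range $[1,t_0]$ the claim follows from non-vanishing and continuity, which is also given by the same argument with $t$ bounded.
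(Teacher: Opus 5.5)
Your proposal is correct and is essentially the paper's own argument: the paper also subtracts $\int_1^\sigma\zeta'(x+it)\,dx$, bounded via Lemma~\ref{Lem:derivative}, from the lower bound of Lemma~\ref{Lem:Vallee Poussin}, takes $n=\lceil 4/\delta\rceil+1$ with $\delta=\gamma-\frac32$ so that $\frac12+\frac{2}{n-1}\le\gamma-1-\frac{\delta}{2}$, chooses $\sigma-1$ as a power of $t$ that makes the error term half the main term, and bounds the resulting exponent by $\frac{4}{\delta^2}+\frac{7}{\delta}+2\le\frac{8}{\delta^2}$. Your reduction of $\gamma=2$ to some $\gamma'<2$ covers a case the paper glosses over, since Lemma~\ref{Lem:Vallee Poussin} and part (2) of Lemma~\ref{Lem:derivative} are only stated for $\gamma<2$; but this step, and your alternative $n=\lceil 8/\delta\rceil+1$ for $\delta$ near $\frac12$, only closes if you evaluate the actual exponent with $\gamma=\frac32+\delta$ rather than a crude term-by-term bound, which works because the true exponent has comfortable slack there.
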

\begin{proof}
For a fixed integer $n$ Lemma~\ref{Lem:Vallee Poussin} yields
\begin{eqnarray*}
|\zeta(1+it)| & = & \left|\zeta(\sigma+it) - \int_1^\sigma\zeta'(x+it)\;dx\right|\\
 & \geq & |\zeta(\sigma+it)| - ct\int_1^\sigma 1+(x-1)^{\gamma-2}\;dx|\\
 & \geq & c_1 (\sigma-1)^{\frac{1}{2}+\frac{2}{n-1}} (1+c_2n|t|^{1/\gamma})^{-n} - c_3t\max(\sigma-1, (\sigma-1)^{\gamma-1})\\
 & = & c_1 (\sigma-1)^{\frac{1}{2}+\frac{2}{n-1}} (1+c_2n|t|^{1/\gamma})^{-n} - c_3t(\sigma-1)^{\gamma-1}.
\end{eqnarray*}
As $n$ is fixed, and we may assume that $|t|\geq 2$, we obtain
\[
|\zeta(1+it)| \geq c_1' (\sigma-1)^{\frac{1}{2}+\frac{2}{n-1}} |t|^{-n/\gamma} - c_3t(\sigma-1)^{\gamma-1}.
\]
Write $\delta=\gamma-\frac{3}{2}$, and put $n=\lceil\frac{4}{\delta}\rceil+1$. Then $\frac{1}{2}+\frac{2}{n-1}\leq \frac{1+\delta}{2}=\gamma-1-\frac{\delta}{2}$ and therefore
\[
|\zeta(1+it)| \gg (\sigma-1)^{\frac{1+\delta}{2}}\left(t^{-\left(\frac{4}{\delta}+2\right)/\gamma} - t\frac{c_3}{c_1'}(\sigma-1)^{\frac{\delta}{2}}\right).
\]
If we pick $\sigma$ in such a way that $t\frac{c_3}{c_1'}(\sigma-1)^{\frac{\delta}{2}} = \frac{1}{2}t^{-\left(\frac{4}{\delta}+2\right)/\gamma}$, then we get
\[
|\zeta(1+it)| \gg (\sigma-1)^{\frac{1+\delta}{2}}t^{-\left(\frac{4}{\delta}+2\right)/\gamma} .
\]
The condition on $\sigma$ translates to $\sigma-1= \left(\frac{c_1'}{2c_3}t^{-(\frac{4}{\delta}-2)/\gamma-1}\right)^{\frac{2}{\delta}}$, that is, $\sigma-1 = c_4 t^{-\frac{8}{\gamma\delta^2}-\frac{4}{\gamma\delta}-\frac{2}{\delta}}$. Inserting this value we obtain
\[
|\zeta(1+it)|\gg t^{\left(-\frac{8}{\gamma\delta^2}-\frac{4}{\gamma\delta}-\frac{2}{\delta}\right)\frac{1+\delta}{2}} t^{-\frac{4}{\gamma\delta}-\frac{2}{\gamma}} \geq t^{\frac{3}{4}\left(-\frac{8}{\delta^2}-\frac{6}{\delta}\right)} t^{-\frac{4}{\delta}-2} = t^{-\frac{4}{\delta^2}-\frac{7}{\delta}-2} > t^{-\frac{8}{\delta}^2}.
\]
\end{proof}
\begin{Lem}
\label{Lem:line bound}
Suppose that $G$ is an arithmetic semigroup satisfying 
$$
N_{G}(x)=Ax+\mathcal{O}\left(\frac{x}{\log^{\gamma} x}\right)
$$ 
with $\frac{3}{2}<\gamma<2$ and $A \in \mathbb{R}$. Then we have
\[
|\zeta(1+it_1) - \zeta(1+i t_2)| \ll |t_1-t_2|^{\gamma-1} t_1^{2-\gamma}.
\]
If $2<\gamma<3$, then we have
\[
|\zeta(1+it_1)' - \zeta(1+i t_2)'| \ll |t_1-t_2|^{\gamma-2} t_1^{3-\gamma}.
\]
\end{Lem}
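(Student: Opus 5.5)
We first prove the bound for $\zeta$ and then repeat the argument one derivative higher for $\zeta'$. Throughout, $t_1\geq t_2\geq 1$ (the roles are symmetric up to the constant), so $h:=t_1-t_2\leq t_1$. If $h\geq 1$ the claim follows from the previous Lemma, since $|\zeta(1+it_j)|\ll t_1^{1/\gamma}\ll h^{\gamma-1}t_1^{2-\gamma}$ once $h$ is large enough. So the real case is $h<1$. We start, as in the proof of Lemma~\ref{Lem:derivative}, from the partial-integration identity
\[
\zeta(s)=\frac{As}{s-1}+s\int_1^\infty E(x)x^{-s-1}\,dx,\qquad E(x)=N_G(x)-Ax\ll \frac{x}{\log^\gamma x}.
\]
For $\gamma>1$ the integral converges absolutely on $\Re s=1$.

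The naive route is to subtract the identities at $s=1+it_1$ and $s=1+it_2$. The difference of the main terms is $\ll h/(t_1t_2)$, and the term $i(t_1-t_2)\int E x^{-2-it_2}dx$ is $\ll h$; both are harmless. What remains is $(1+it_1)\int_1^\infty E(x)x^{-2}(x^{-it_1}-x^{-it_2})\,dx$. Split it at $\log x=1/h$. Below the split point use $|x^{-it_1}-x^{-it_2}|\leq h\log x$; above it use the trivial bound $2$. Each piece is $\ll h^{\gamma-1}$, but the prefactor $t_1$ then yields only $t_1h^{\gamma-1}$. The whole difficulty of the Lemma is recovering the saving $t_1^{\gamma-1}$, i.e.\ not paying the full factor $|s|$ from partial summation.

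To gain it we split $\zeta$ at a height $y=e^{L}$ rather than integrating by parts globally. For the head $\sum_{\|g\|\leq y}\|g\|^{-1-it}$ we work directly with the positive measure $dN_G$, without partial summation. The difference of the heads is then $\ll h\sum_{\|g\|\leq y}\log\|g\|/\|g\|\ll hL^2$, which carries no factor $t$. We can do better by subtracting the smooth part $A\int_1^y x^{-1-it}\,dx$ exactly; its difference in $t$ is explicitly $\ll h\min(L,1/t_1)$-type. Only the $dE$-part of the head is then estimated, as $h\int_1^y\log x\,|dE|$-type pieces, where we trade absolute bounds against the $\log^{-\gamma}$ decay.

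For the tail $\sum_{\|g\|>y}$ we use the partial-integration form above, starting at $y$. This gives a difference
\[
\ll \frac{1}{t_1}+\frac{1}{L^{\gamma}}+t_1\int_{\log x>L}\frac{\min(h\log x,1)}{\log^{\gamma}x}\,\frac{dx}{x}.
\]
We then optimise $L$ in terms of $t_1$ and $h$. The choice I would try first is $L\asymp t_1/h$ type scales, i.e.\ oscillation scale $1/h$ combined with the natural cutoff $\log x\asymp t_1$ where $x^{-it_1}$ starts to oscillate, balanced so that the head and tail contributions meet at $h^{\gamma-1}t_1^{2-\gamma}$.

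An equivalent and perhaps cleaner bookkeeping uses Lemma~\ref{Lem:derivative} and the previous Lemma through
\[
\zeta(1+it_1)-\zeta(1+it_2)=[\zeta(1+it_1)-\zeta(\sigma+it_1)]+[\zeta(\sigma+it_1)-\zeta(\sigma+it_2)]+[\zeta(\sigma+it_2)-\zeta(1+it_2)].
\]
The outer brackets are $\ll t(\sigma-1)^{\gamma-1}$. The middle one is $\ll h\sup|\zeta'|$, and here one interpolates between $|\zeta'|\ll t(\sigma-1)^{\gamma-2}$ from Lemma~\ref{Lem:derivative} and the trivial bound $|\zeta'(\sigma+it)|\leq\sum\log\|g\|\,\|g\|^{-\sigma}\ll(\sigma-1)^{-2}$. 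The main obstacle, in either bookkeeping, is exactly this balancing step: I expect to need the sharper tail estimate in which the factor $t_1$ multiplies only the range $\log x\gtrsim 1/h$ where the $\log^{-\gamma}$ decay gives $h^{\gamma-1}$, while the factor $t_1^{2-\gamma}$ arises as $t_1\cdot t_1^{1-\gamma}$ from cutting at $\log x\asymp t_1$.

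For the second statement, with $2<\gamma<3$, we run the same scheme for $\zeta'$. Differentiating the identity introduces one extra $\log x$ in every integrand, i.e.\ effectively replaces $\gamma$ by $\gamma-1$. This is how Lemma~\ref{Lem:derivative}(4) differs from (2). With $\gamma-1\in(1,2)$ every step carries over verbatim and gives $h^{\gamma-2}t_1^{3-\gamma}$.
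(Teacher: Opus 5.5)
The proposal has a genuine gap: the balancing step you flag as the main obstacle is where it fails, and neither bookkeeping can get past it. In the head/tail split the head genuinely costs $hL^2$. On $[1,y]$ the measure $|dE|=dN_G+A\,dx$ has mass $\asymp y$, so the $\log^{-\gamma}$ decay of $E$ can only be used after integrating by parts, and that brings back the factor $|s|$ you are trying to avoid. Subtracting the smooth part $A\int_1^y x^{-1-it}\,dx$ does not change this. The tail costs $t_1\min(h^{\gamma-1},L^{1-\gamma})$; restricting the factor $t_1$ to the range $\log x\geq 1/h$, as you suggest, still leaves $t_1h^{\gamma-1}$.

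So for $L\leq 1/h$ you get nothing better than $t_1h^{\gamma-1}$. For $L\geq 1/h$ the optimal choice $L^{\gamma+1}\asymp t_1/h$ gives $h^{(\gamma-1)/(\gamma+1)}t_1^{2/(\gamma+1)}$. This exceeds the target $h^{\gamma-1}t_1^{2-\gamma}$ by the unbounded factor $(t_1/h)^{\gamma(\gamma-1)/(\gamma+1)}$. Cutting at $\log x\asymp t_1$ instead makes the head alone $ht_1^2$.

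The interpolation bookkeeping is the same computation, with $\delta=\sigma-1$ playing the role of $1/L$. Minimizing $t_1\delta^{\gamma-1}+h\min(t_1\delta^{\gamma-2},\delta^{-2})$ gives, up to constants, $\min(t_1h^{\gamma-1},\,h^{(\gamma-1)/(\gamma+1)}t_1^{2/(\gamma+1)})$ again. Your reduction to $h<1$ also fails. Since $\frac{1}{\gamma}-(2-\gamma)=\frac{(\gamma-1)^2}{\gamma}>0$, the bound $t_1^{1/\gamma}$ from the previous lemma is $\ll h^{\gamma-1}t_1^{2-\gamma}$ only for $h\gg t_1^{(\gamma-1)/\gamma}$, not for all $h\geq 1$. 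The $\zeta'$ statement inherits all of this.

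The paper's proof is essentially your second bookkeeping without the interpolation. It uses the path $1+it_1\to\sigma+it_1\to\sigma+it_2\to 1+it_2$, applies Lemma~\ref{Lem:derivative} on each segment, and takes $\sigma-1=|t_1-t_2|/t_1$. It reaches $t_1^{2-\gamma}$ only because it bounds the vertical segment by $|t_1-t_2|(\sigma-1)^{\gamma-2}$. That drops the factor $t$ in $|\zeta'(\sigma+it)|\ll 1+t(\sigma-1)^{\gamma-2}$. With the factor restored, the vertical segment contributes $|t_1-t_2|\,t_1(\sigma-1)^{\gamma-2}$, the best choice is $\sigma-1\asymp|t_1-t_2|$, and the outcome is $t_1|t_1-t_2|^{\gamma-1}$. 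That is exactly your naive bound; the paper's $\zeta'$ computation has the same defect.

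So your diagnosis is right, and the saving $t_1^{\gamma-1}$ is supplied by neither argument. There is also no evident source for it, since the hypothesis only bounds $|E|$. In $\zeta(1+it)=\frac{A(1+it)}{it}+(1+it)\int_0^\infty e^{-u}E(e^u)e^{-itu}\,du$, a component of $e^{-u}E(e^u)$ of size $u^{-\gamma}$ oscillating at frequency $t_1$ would produce a difference of order $t_1|t_1-t_2|^{\gamma-1}$ for small $|t_1-t_2|$.

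What the method does prove is $|\zeta(1+it_1)-\zeta(1+it_2)|\ll t_1|t_1-t_2|^{\gamma-1}$ for $|t_1-t_2|\leq 1$, and $\ll t_1|t_1-t_2|^{\gamma-2}$ for $\zeta'$ when $2<\gamma<3$. This weaker form still gives Lemma~\ref{Lem:Moebius continuity} as stated, because $1+16/(\gamma-\frac{3}{2})^2\leq 17/(\gamma-\frac{3}{2})^2$.
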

\begin{proof}
Let $p$ be the path going parallel to the real axis from $1+it_1$ to $\sigma+it_1$, parallel to the imaginary axis to $\sigma+it_2$, and parallel to the real axis to $1+it_2$. Then, using Lemma~\ref{Lem:derivative}, we have
\begin{eqnarray*}
|\zeta(1+it_1)-\zeta(1+it_2)| & = & \left|\int_p \zeta'(s)\;ds\right| \leq \int_p |\zeta'(s)|\;ds\\
 & \ll & 
 |t_1-t_2| + |\sigma-1| + \max(t_1, t_2)\int_1^\sigma (s-1)^{\gamma-2}\;ds + |t_1-t_2|(\sigma-1)^{\gamma-2}\\
 & \ll & |t_1-t_2| + |\sigma-1| + \max(t_1, t_2)(\sigma-1)^{\gamma-1} +  |t_1-t_2|(\sigma-1)^{\gamma-2}\\
 & \ll & t_1(\sigma-1)^{\gamma-1} +  |t_1-t_2|(\sigma-1)^{\gamma-2}\\
 & \ll &  t_1\left(\frac{|t_1-t_2|}{t_1}\right)^{\gamma-1}\\
  & = & |t_1-t_2|^{\gamma-1} t_1^{2-\gamma}.
\end{eqnarray*}
The same computation yields 
\begin{eqnarray*}
|\zeta(1+it_1)'-\zeta(1+it_2)'|  & \ll &  t_1(\sigma-1)^{\gamma-3} +  |t_1-t_2|(\sigma-1)^{\gamma-2} \\
 & \ll & t_1\left(\frac{|t_1-t_2|}{t_1}\right)^{\gamma-2}\\
 & = & |t_1-t_2|^{\gamma-2} t_1^{3-\gamma}.
\end{eqnarray*}
\end{proof}
\begin{Lem}
Suppose that $G$ is an arithmetic semigroup satisfying
$$
N_{G}(x) = Ax+\mathcal{O}\left(\frac{x}{\log^{\gamma} x}\right)
$$ 
with $2<\gamma<3$ and $A \in \mathbb{R}$. Then 
$$
\left\{ t\mapsto\frac{\zeta_G'(1+it)}{\zeta_G(1+it)} \right\}
$$ 
is $\gamma-2$-H\"older continuous, and the constant of the restrition of this function to the interval $[t, t-1]$ is $\mathcal{O}(t^{66})$.
\end{Lem}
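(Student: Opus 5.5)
The natural route is to write the quotient difference as
\[
\frac{\zeta'(1+it_1)}{\zeta(1+it_1)}-\frac{\zeta'(1+it_2)}{\zeta(1+it_2)}
=\frac{\zeta'(1+it_1)-\zeta'(1+it_2)}{\zeta(1+it_1)}
+\zeta'(1+it_2)\,\frac{\zeta(1+it_2)-\zeta(1+it_1)}{\zeta(1+it_1)\zeta(1+it_2)}
\]
and to bound each of the four ingredients separately for $t_1,t_2$ in an interval of length one near $t$. These ingredients are the difference of derivatives, the difference of zeta values, the size of $\zeta'$, and a lower bound for $|\zeta|$. Since $\gamma>2$, the hypothesis $N_G(x)=Ax+\mathcal{O}(x/\log^{\gamma}x)$ also holds with any smaller exponent. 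So every earlier lemma stated for $\frac32<\gamma\le 2$ or $\frac32<\gamma<2$ is available, for example with exponent $\gamma_0=\frac{7}{4}$ fixed.

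\emph{Numerator estimates.} The second part of Lemma~\ref{Lem:line bound}, applied with $2<\gamma<3$, gives
\[
|\zeta'(1+it_1)-\zeta'(1+it_2)|\ll|t_1-t_2|^{\gamma-2}t^{3-\gamma}.
\]
The first part, applied with $\gamma_0$, gives $|\zeta(1+it_1)-\zeta(1+it_2)|\ll|t_1-t_2|^{\gamma_0-1}t^{2-\gamma_0}$. Because $|t_1-t_2|\le 1$ and $\gamma_0-1>\gamma-2$, this is also $\ll|t_1-t_2|^{\gamma-2}t^{2-\gamma_0}$.

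For $|\zeta'(1+it)|$ one needs a bound on the line itself. Here $\gamma>2$ makes the integral $\int_e^\infty x^{-1}\log^{1-\gamma}x\,dx$ in the proof of Lemma~\ref{Lem:derivative} converge at $\sigma=1$. This gives $|\zeta'(\sigma+it)|\ll 1+|t|$ uniformly for $\sigma\ge1$, $|t|\ge1$, so $|\zeta'(1+it_2)|\ll t$. Alternatively one can quote part (2) with $\gamma_0$ and integrate, but the direct bound is cleaner.

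\emph{Denominator (main step).} The lower bound comes from Lemma~\ref{Lem:inv bound} applied with $\gamma_0=\frac74$. There $\delta=\frac14$, so $8/\delta^2=128$ and $|\zeta(1+it)|\gg t^{-128}$. That exponent is too large for the claimed $t^{66}$. One must therefore choose the auxiliary exponent more cleverly, or redo the de la Vallée Poussin argument of Lemma~\ref{Lem:Vallee Poussin} using the better bound $|\zeta'|\ll 1+|t|$ valid for $\gamma>2$.

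With $|\zeta'|\ll t$ the subtraction step in Lemma~\ref{Lem:inv bound} becomes
\[
|\zeta(1+it)|\ge c_1(\sigma-1)^{\frac12+\frac2{n-1}}t^{-n/\gamma}-c_3t(\sigma-1).
\]
This is the $\delta=\frac12$ case of that argument. Taking $n=9$, optimizing $\sigma$ and using $\gamma>2$ gives $|\zeta(1+it)|\gg t^{-c}$ with $c$ on the order of $32$. I expect this bookkeeping, which yields an explicit exponent small enough to fit the target, to be the main obstacle. The remaining steps are routine.

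\emph{Assembly.} Combining the pieces, the first term of the decomposition is $\ll|t_1-t_2|^{\gamma-2}t^{3-\gamma}t^{c}$. The second term is
\[
\ll t\cdot|t_1-t_2|^{\gamma-2}t^{2-\gamma_0}\cdot t^{2c}.
\]
Thus the H\"older constant on $[t-1,t]$ is $\mathcal{O}(t^{2c+2})$, which is $\mathcal{O}(t^{66})$ once $c\le 32$. Local H\"older continuity near bounded $t$ follows in the same way, since $\zeta(1+it)\neq0$ and all quantities are continuous there.
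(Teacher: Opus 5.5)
Your route is the paper's: the same splitting of the difference of quotients, the H\"older bound for $\zeta'$ from Lemma~\ref{Lem:line bound}, a polynomial bound for $|\zeta'|$ on the line, and a lower bound $|\zeta(1+it)|\gg t^{-32}$. In the first term you even cancel the factor $|\zeta(1+it_1)|$, which the paper instead bounds above by $t_1^{1/2+\epsilon}$. The lower bound is the $\delta=\frac12$ case of Lemma~\ref{Lem:inv bound}, which the paper uses as $t^{-32-\epsilon}$.

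As written, one step fails. With $\gamma_0=\frac74$ the claim $\gamma_0-1>\gamma-2$ is false for $\gamma\in(\frac{11}{4},3)$, and then $|t_1-t_2|^{3/4}\le|t_1-t_2|^{\gamma-2}$ fails for small $|t_1-t_2|$. There are two easy repairs. You can take $\gamma_0\in(\max(\frac32,\gamma-1),2)$; the paper effectively lets $\gamma_0\to2$ and writes $|t_1-t_2|^{1-\epsilon}t_1^{\epsilon}$. More simply, integrate your own bound $|\zeta'(\sigma+iy)|\ll 1+|y|$ along a vertical segment and let $\sigma\to1^+$. This gives $|\zeta(1+it_1)-\zeta(1+it_2)|\ll t|t_1-t_2|\le t|t_1-t_2|^{\gamma-2}$, and the second term is again $\ll t^{2c+2}|t_1-t_2|^{\gamma-2}$.

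The denominator, which you leave unverified, is no real obstacle. You only say ``$c$ on the order of $32$'', but your count needs $c\le 32$. Lemma~\ref{Lem:inv bound} is stated for $\frac32<\gamma\le 2$, and $\gamma>2$ gives the hypothesis with exponent $2$. So the ``cleverer choice'' is simply $\gamma_0=2$, which gives $t^{-32}$ and hence the claimed $\mathcal{O}(t^{66})$.

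If you prefer to redo the computation, note that Lemma~\ref{Lem:Vallee Poussin} is only available for exponents below $2$. So ``using $\gamma>2$'' to shrink $n/\gamma$ is not justified, but it is also unnecessary. Take exponent $2-\eta$ there, with $n=9$ and $u=\sigma-1$. Then $|\zeta(1+it)|\ge c_1u^{3/4}t^{-9/(2-\eta)}-c_3tu$. Choosing $u^{1/4}\asymp t^{-1-9/(2-\eta)}$ gives $|\zeta(1+it)|\gg t^{-3-36/(2-\eta)}$, i.e.\ $c$ just above $21$, and a H\"older constant $\mathcal{O}(t^{44+\epsilon})$.

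Finally, your closing remark on bounded $t$ is wrong at $t=0$. For $A\neq0$, $\zeta$ has a pole at $s=1$, so $\frac{\zeta'}{\zeta}(1+it)\sim-\frac{1}{it}$ is unbounded near $0$ and cannot be H\"older on an interval containing $0$. The paper's ``$\mathcal{O}(1)$ for $|t_1|,|t_2|\le 1$'' has the same defect. The statement should be read for $|t|\ge1$, or for $H(s)=-\frac{\zeta'}{\zeta}(s)-\frac{1}{s-1}$, which is what the application uses.
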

\begin{proof}
Suppose that $|t_1-t_2|\leq 1$ and $t_1, t_2\geq 1$. Then we have
\begin{eqnarray*}
& & \left|\frac{\zeta'(1+it_1)}{\zeta(1+it_1)} - \frac{\zeta'(1+it_2)}{\zeta(1+it_2)}\right| 
= \frac{|\zeta(1+it_1)\zeta'(1+i t_2) - \zeta(1+i t_2)\zeta'(1+i t_1)|}{|\zeta(1+it_1)\zeta(1+i t_2)|} \\
\ \\
& & \leq  \frac{|\zeta(1+it_1)|\cdot|\zeta'(1+i t_2) - \zeta'(1+i t_1)| + 
|\zeta'(1+it_1)|\cdot|\zeta(1+it_1)-\zeta(1+it_2)|}{|\zeta(1+it_1)\zeta(1+i t_2)|} \\
\ \\
& & \ll  \frac{t_1^{\frac{1}{2}+\epsilon}|t_1-t_2|^{\gamma-2}t_1^{3-\gamma} + 
t_1^{\frac{1}{\gamma-1}} |t_1-t_2|^{1-\epsilon} t_1^\epsilon}{t_1^{-32-\epsilon}t_2^{-32-\epsilon}} \\
\ \\
& & \ll t_1^{66} |t_1-t_2|^{\gamma-2}, 
\end{eqnarray*}
which implies our claim. For $|t_1|, |t_2| \leq 1$ we can estimate the constant by $\mathcal{O}(1)$, and our claim follows. 
\end{proof}

We define 
$$
\Lambda(g)=
\left\{
\begin{array}{ll} 
\log \|p\|, & \text{$g$ is a power of a prime $p$} \\
 0, & \text{else}
\end{array}\right.
$$
and put $\psi(x)=\sum_{\|g\|\leq x} \Lambda(g)$. For $\Re s>1$ we have
\begin{equation}
\label{eq:Lambda series}
-\frac{\zeta'}{\zeta}(s) = \sum_g \frac{\Lambda(g)}{\|g\|^s} = \sum_g \Lambda(g)e^{-s\log\|g\|} = s\int_0^\infty \psi(e^x) e^{-sx}\;dx
\end{equation}
Bekehermes \cite[Lemma~3.4]{Tobias} proved the following.
\begin{Lem}
\label{Lem:zeta near 1}
Suppose that $\gamma>2$ and put $H(s) = -\frac{\zeta'}{\zeta}(s)-\frac{1}{s-1}$. Then the limit 
$\underset{\Re s>1}{\lim\limits_{s\rightarrow 1}}\frac{H(s)-H(1)}{s-1}$ exists. 
\end{Lem}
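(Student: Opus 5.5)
The plan is to show that $H$ is regular enough near $s=1$ that the difference quotient converges. The idea is to write $-\frac{\zeta'}{\zeta}(s)-\frac{1}{s-1} = \frac{-(s-1)\zeta'(s)-\zeta(s)}{(s-1)\zeta(s)}$. Set $F(s)=(s-1)\zeta(s)$, which is the natural object near $1$. Then $-\frac{\zeta'}{\zeta}(s)=-\frac{F'(s)}{F(s)}+\frac{1}{s-1}$, so $H(s)=-\frac{F'(s)}{F(s)}$. It therefore suffices to show three things: $F$ extends continuously to $\{\Re s\geq 1\}$ near $1$ with $F(1)=A\neq 0$; $F'$ extends continuously; and $F'$ is differentiable at $1$ in the one-sided sense, or at least $\frac{F'(s)-F'(1)}{s-1}$ has a limit.

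\textbf{Step 1: a representation of $F$.} By partial summation, as in the proof of Lemma~\ref{Lem:derivative}, write $N_G(x)=Ax+E(x)$ with $E(x)\ll x\log^{-\gamma}x$. This gives
\[
\zeta(s)=\frac{As}{s-1}+s\int_1^\infty E(x)x^{-s-1}\,dx .
\]
Hence $F(s)=As+s(s-1)R(s)$ with $R(s)=\int_1^\infty E(x)x^{-s-1}\,dx$. Differentiating under the integral produces factors $\log^k x$. Since $\gamma>2$, we have $\int_1^\infty |E(x)|x^{-2}\log x\,dx<\infty$, so $R$ and $R'$ extend continuously to $\Re s\geq1$. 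In particular $F(1)=A$. One must check $A\neq0$; this follows since $N_G$ is unbounded, or else $\zeta$ would be bounded near $1$ while $\zeta(\sigma)\geq 1$ increases to infinity.

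\textbf{Step 2: expansion of $F$ near $1$.} We have $F'(s)=A+(2s-1)R(s)+s(s-1)R'(s)$. For the difference quotient of $F'$ we need $(s-1)R'(s)\to0$ and $\frac{R(s)-R(1)}{s-1}\to R'(1)$. Both follow from the continuity of $R'$ up to the line, using the mean value inequality along the segment from $1$ to $s$, which lies in the closed half plane. The term $\frac{s(s-1)R'(s)}{s-1}=sR'(s)$ then tends to $R'(1)$. So $F'$ has a derivative at $1$ in the restricted sense. It is also worth noting that $\gamma>2$ is exactly what makes $R'$ continuous at $1$, since $\int E(x)x^{-2}\log x\,dx$ converges absolutely.

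\textbf{Step 3: conclude for $H$.} Write $H=-F'/F$. Since $F(1)\neq0$ and $F$ is continuous, $F$ is bounded away from $0$ near $1$. Then
\[
\frac{H(s)-H(1)}{s-1}=-\frac{F'(s)F(1)-F'(1)F(s)}{(s-1)F(s)F(1)}.
\]
Split the numerator as $F(1)\bigl(F'(s)-F'(1)\bigr)-F'(1)\bigl(F(s)-F(1)\bigr)$. The first quotient converges by Step 2. The second converges to $F'(1)$ by continuity of $F'$ and the mean value argument. Together these give the limit.

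The main obstacle is Step 2. There, $F'$ involves $(s-1)R'(s)$, whose difference quotient is just $sR'(s)$ and is harmless. But $R(s)$ itself must be differentiable at $1$ in the one-sided sense, which needs exactly continuity of $R'$ on the closed half plane; this is where $\gamma>2$ enters. If that is too delicate, I would instead invoke dominated convergence directly on
\[
\frac{R(s)-R(1)}{s-1}=\int_1^\infty E(x)x^{-2}\,\frac{x^{-(s-1)}-1}{s-1}\,dx,
\]
using the bound $|x^{-(s-1)}-1|\leq |s-1|\log x$, valid for $\Re s\geq1$.
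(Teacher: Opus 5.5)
Your reduction $H=-F'/F$ with $F(s)=(s-1)\zeta(s)$ is sound, but the step where you \emph{derive} $F(1)=A\neq 0$ from the hypotheses is false, and no argument can repair it. If $A=0$ and $\gamma>1$, then $\int_1^\infty N_G(x)x^{-2}\,dx<\infty$. So $\zeta(\sigma)$ increases to the \emph{finite} value $\sum_{g}\|g\|^{-1}$ as $\sigma\to 1^+$, and neither unboundedness of $N_G$ nor monotonicity of $\zeta(\sigma)$ gives a contradiction.

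Concretely, let $G=\{p^k : k\geq 0\}$ with $\|p^k\|=2^k$. Then $N_G(x)=\lfloor \log_2 x\rfloor+1=0\cdot x+\mathcal{O}(x/\log^\gamma x)$ for every $\gamma$, and $\zeta_G(s)=(1-2^{-s})^{-1}$. Hence $H(s)=\frac{\log 2}{2^s-1}-\frac{1}{s-1}$ has a pole at $s=1$, so the statement itself fails. Since $N_G\geq 1$ forces $A\geq 0$, what is missing is the hypothesis $A>0$. It is tacit in the paper (Theorem~\ref{thm:PNT} also fails for this $G$) and is the usual normalisation in abstract analytic number theory. You should state it as an assumption rather than claim to prove it.

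With $A>0$ assumed, the rest of your argument is correct and complete:
\begin{enumerate}
\item $R$ and $R'$ extend continuously to $\Re s\geq 1$, by absolute convergence of $\int_1^\infty |E(x)|x^{-2}\log x\,dx$. This is exactly where $\gamma>2$ enters; on $[1,e]$ just use that $E$ is bounded, since the bound $x\log^{-\gamma}x$ is only meaningful for large $x$.
\item One has $\frac{F'(s)-F'(1)}{s-1}=2R(s)+\frac{R(s)-R(1)}{s-1}+sR'(s)$.
\item Both of your justifications of $\frac{R(s)-R(1)}{s-1}\to R'(1)$ are valid: the mean value inequality along the segment, and dominated convergence with $|x^{-(s-1)}-1|\leq |s-1|\log x$.
\item The splitting of the numerator in Step~3 finishes the proof.
\end{enumerate}

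The paper gives no argument of its own here; it simply quotes Bekehermes \cite[Lemma~3.4]{Tobias}. Your proof is therefore a self-contained substitute. It makes explicit that all that is needed is $C^1$-regularity of $(s-1)\zeta(s)$ up to the line near $s=1$, together with $A\neq 0$. It also yields the value of the limit, namely $1+\frac{R(1)^2}{A^2}-\frac{2R'(1)}{A}$.
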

In particular we have that $\frac{H(s)-H(1)}{s-1}$ remains bounded in some neighbourhood of 1.
Finally Diamond\cite{Diamond} showed that $\gamma>1$ already implies $\psi(x)\ll x$. Hence, we can apply Theorem~\ref{thm:Laplace} to the function $f(t)=\frac{\psi(e^t)-e^t}{e^t}$ to find that we have for $2<\gamma<3$ the estimate
\[
\int_T^\infty f(t)\;dt \ll \left(\frac{R^{66}}{T}\right)^{\gamma-2} + \frac{R^{34}}{T} + \frac{1}{T^{\frac{\gamma-2}{3}}} + \frac{1}{R}.
\]
The first term always dominates the second one, to balance the first and the last one we have to pick $R=\frac{\gamma-2}{66(\gamma-2)+1}$ and obtain
\[
\int_T^\infty f(t)\;dt \ll T^{-\frac{\gamma-2}{66(\gamma-2)+1}} + T^{-\frac{\gamma-2}{3}} \ll T^{-\min\left(\frac{\gamma-2}{3}, \frac{1}{99}\right)}.
\]
Now suppose that $\psi(e^{t_0})>(1+\delta)e^{t_0}$. As $\psi$ is non-decreasing, we have $\psi(e^t)>(1+\frac{\delta}{2})e^t$ for $t\in[t_0, t_0+\log 1+\frac{\delta}{2}]>\frac{\delta}{2}$. Note that $\log(1+\frac{\delta}{2})>\frac{\delta}{3}$, provided that $\delta$ is sufficinetly small. Hence, on one hand we have
\begin{multline*}
\int_{t_0}^\infty \psi(e^t)-e^t\;dt - \int_{t_0+\frac{\delta}{3}}^\infty \psi(e^t)-e^t\;dt \leq\\
 \left|\int_{t_0}^\infty \psi(e^t)-e^t\;dt\right| + \left|\int_{t_0+\frac{\delta}{3}}^\infty \psi(e^t)-e^t\;dt\right| \ll  t_0^{-\min\left(\frac{\gamma-2}{3}, \frac{1}{99}\right)},
\end{multline*}
while on the other hand we have
\[
\int_{t_0}^\infty \psi(e^t)-e^t\;dt - \int_{t_0+\frac{\delta}{3}}^\infty \psi(e^t)-e^t\;dt = \int_{t_0}^{t_0+\frac{\delta}{3}} \psi(e^t)-e^t\;dt \geq \frac{\delta^2}{6}.
\]
Combining these estimates we find $\delta\ll t_0^{-\min(\frac{\gamma-2}{6}, \frac{1}{198})}$. We conclude that 
\[
\psi(e^t)\leq \left(1+t^{-\min(\frac{\gamma-2}{6}, \frac{1}{198})}\right) e^t.
\]
In the same we we obtain a lower bound: If $\psi(e^{t_0})$ is significantly smaller than $e^{t_0}$, then $\psi(e^t)$ is smaller than $e^t$ in a small interval to the left of $t$, and our claim follows in the same way as for the upper bound. Hence, we obtain that $\psi(x)=x+\mathcal{O}\left(\frac{x}{\log^\delta x}\right)$ with $\delta$ as in the theorem. We have
\begin{eqnarray*}
\psi(x) & \geq & \psi(x)-\psi\left(\frac{x}{\log^2 x}\right)\\
 &  \geq  & \left(\pi(x)-\pi\left(\frac{x}{\log^2 x}\right)\right)\log\left(\frac{x}{\log^2 x}\right)\\
 &  \geq & \left(\pi(x)-N\left(\frac{x}{\log^2 x}\right)\right)\log x - \pi(x)\log\log x\\
  & = & \pi(x)\log x + \mathcal{O}\left(\frac{x\log\log x}{\log x}\right).
\end{eqnarray*}
As $\delta<1$, we can use the upper bound for $\psi(x)$ to obtain $\pi(x)\leq\frac{x}{\log x} + \mathcal{O}\left(\frac{x}{\log^\delta x}\right)$. The corresponding lower bound follos in the same way, but is considerably easier, and the proof of Theorem~\ref{thm:PNT} is complete.

To prove Theorem~\ref{thm:Moebius} we first write $\frac{1}{\zeta(s)}$ similar to (\ref{eq:Lambda series}) as
\[
\frac{1}{\zeta(s)} = s\int_0^\infty M(e^t) e^{-ts}\;dt 
\]

\begin{Lem}
\label{Lem:Moebius continuity}
Suppose that $G$ is an arithmetic semigroup satisfying 
$$
N_{G}(x)=Ax+\mathcal{O}\left(\frac{x}{\log^{\gamma} x}\right)
$$ 
with $\frac{3}{2}<\gamma\leq 2$ and $A \in \mathbb{R}$. Then we have for $t_1, t_2>1$ with $|t_1-t_2|\leq 1$ the estimate
\[
\left|\frac{1}{\zeta(1+it_1)}-\frac{1}{\zeta(1+it_2)}\right| \ll  |t_1-t_2|^{\gamma-1} t_1^{\frac{17}{(\gamma-\frac{3}{2})^2}}
\]
\end{Lem}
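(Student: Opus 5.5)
We write the difference of reciprocals as a single fraction:
\[
\frac{1}{\zeta(1+it_1)}-\frac{1}{\zeta(1+it_2)} = \frac{\zeta(1+it_2)-\zeta(1+it_1)}{\zeta(1+it_1)\,\zeta(1+it_2)}.
\]
We then estimate the numerator and the denominator separately, using only results already proved for the range $\frac{3}{2}<\gamma\leq 2$.

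\emph{Numerator.} For $\gamma<2$, Lemma~\ref{Lem:line bound} gives directly
\[
|\zeta(1+it_1)-\zeta(1+it_2)|\ll |t_1-t_2|^{\gamma-1}t_1^{2-\gamma}.
\]
For $\gamma=2$ the same path argument applies: go horizontally from $1+it_1$ to $\sigma+it_1$, vertically to $\sigma+it_2$, and horizontally back to $1+it_2$. Using the logarithmic bound of Lemma~\ref{Lem:derivative}(3) in place of the power bound, this gives
\[
|t_1-t_2|\,t_1\log\frac{t_1}{|t_1-t_2|},
\]
and the logarithm can be absorbed into a factor $t_1^{\epsilon}$ times $|t_1-t_2|^{1-\epsilon}$. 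Alternatively, the hypothesis with $\gamma=2$ implies it with any $\gamma'<2$, so we may simply reduce to $\gamma<2$. In either case the numerator is $\ll |t_1-t_2|^{\gamma-1}t_1^{2-\gamma}\le |t_1-t_2|^{\gamma-1}t_1^{1/2}$.

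\emph{Denominator.} By Lemma~\ref{Lem:inv bound}, with $\delta=\gamma-\frac{3}{2}$,
\[
|\zeta(1+it_j)|\gg t_j^{-8/\delta^2}\quad (j=1,2).
\]
Since $|t_1-t_2|\le 1$ and $t_1,t_2>1$, we have $t_2\le t_1+1\le 2t_1$. Hence $t_2^{-8/\delta^2}\gg t_1^{-8/\delta^2}$, where the implied constant is $2^{-8/\delta^2}$, which depends only on $\gamma$. Therefore
\[
\frac{1}{|\zeta(1+it_1)\zeta(1+it_2)|}\ll t_1^{16/\delta^2}.
\]

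\emph{Combining.} Putting the two estimates together gives the bound
\[
|t_1-t_2|^{\gamma-1}\,t_1^{\frac{16}{\delta^2}+2-\gamma}.
\]
Since $2-\gamma<\frac12$ and $\delta\le\frac12$, we have $\frac{1}{\delta^2}\ge 4>2-\gamma$. This yields the exponent $\frac{17}{\delta^2}$ claimed.

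\emph{Main obstacle.} The delicate point is the endpoint $\gamma=2$, where Lemma~\ref{Lem:line bound} is stated only for $\gamma<2$ and we need the H\"older exponent $\gamma-1=1$. I would handle it by redoing the path estimate with Lemma~\ref{Lem:derivative}(3), choosing $\sigma-1=|t_1-t_2|/t_1$. This gives $|t_1-t_2|\,t_1\log(t_1/|t_1-t_2|)$ up to constants. The unbounded factor $\log(1/|t_1-t_2|)$ must then either be absorbed, which is harmless for the intended application to $p$-variation since any H\"older exponent arbitrarily close to $1$ suffices, or be accepted as a slight weakening at $\gamma=2$. Everything else is bookkeeping of exponents.
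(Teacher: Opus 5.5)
Your argument is the paper's own. You write the difference as $(\zeta(1+it_2)-\zeta(1+it_1))/(\zeta(1+it_1)\zeta(1+it_2))$, bound the numerator with Lemma~\ref{Lem:line bound} and the denominator with Lemma~\ref{Lem:inv bound}, and absorb $t_1^{2-\gamma}$ using $1/\delta^2\ge 4$. Your observation $t_2\le 2t_1$ makes explicit a step the paper leaves tacit. For $\frac32<\gamma<2$ this is complete, modulo the cited lemmas.

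The weak point is the endpoint $\gamma=2$, which the statement includes but Lemma~\ref{Lem:line bound} does not cover. The paper silently applies that lemma there anyway, so you have found a real soft spot. However, your sentence ``in either case the numerator is $\ll |t_1-t_2|^{\gamma-1}t_1^{2-\gamma}$'' follows from neither of your routes. Passing to $\gamma'<2$ only gives $|t_1-t_2|^{\gamma'-1}$, which is weaker than $|t_1-t_2|$ because $|t_1-t_2|\le 1$. Your direct path computation gives $|t_1-t_2|\,t_1\log(t_1/|t_1-t_2|)$. The extra $t_1$ is harmless: at $\gamma=2$ one has $16/\delta^2+1=65\le 68=17/\delta^2$. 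The factor $\log(1/|t_1-t_2|)$, however, cannot be absorbed into $|t_1-t_2|^{1}$. So at $\gamma=2$ your argument yields only $|t_1-t_2|^{1-\epsilon}t_1^{17/\delta^2}$ for each $\epsilon>0$, or the Lipschitz bound up to a logarithmic factor. You should state that weakening, or restrict to $\gamma<2$, rather than claim the full estimate.

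Incidentally, your path computation correctly keeps the factor $t$ from Lemma~\ref{Lem:derivative} on the vertical leg, which the proof of Lemma~\ref{Lem:line bound} drops. With that factor restored, the case $\gamma<2$ (taking $\sigma-1=|t_1-t_2|$) gives $|t_1-t_2|^{\gamma-1}t_1$ rather than $|t_1-t_2|^{\gamma-1}t_1^{2-\gamma}$. Your slack argument covers this too, since $16/\delta^2+1\le 17/\delta^2$, so the lemma's stated exponent of $t_1$ survives.
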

\begin{proof}
Using Lemma~\ref{Lem:inv bound} and \ref{Lem:line bound} we have
\begin{multline*}
\left|\frac{1}{\zeta(1+it_1)}-\frac{1}{\zeta(1+it_2)}\right| = \frac{|\zeta(1+it_2)-\zeta(1+it_1)|}{|\zeta(1+it_1)\zeta(1+it_2)|} \ll \frac{|t_1-t_2|^{\gamma-1}t_1^{2-\gamma}}{t_1^{-\frac{8}{(\gamma-\frac{3}{2})^2}}t_2^{-\frac{8}{(\gamma-\frac{3}{2})^2}}}\\
\ll |t_1-t_2|^{\gamma-1} t_1^{2-\gamma+\frac{16}{(\gamma-\frac{3}{2})^2}} \ll 
|t_1-t_2|^{\gamma-1} t_1^{\frac{17}{(\gamma-\frac{3}{2})^2}}.
\end{multline*}
\end{proof}
Applying Theorem~\ref{thm:Laplace} as in the proof of Theorem~\ref{thm:PNT} we obtain
\[ 
\int_T^\infty \frac{M(e^t)}{e^t}\;dt \ll T^{-\frac{\gamma-1}{3}} + \frac{R^{\frac{17}{(\gamma-\frac{3}{2})^2}}}{T^{\gamma-1}} + \frac{R^{\frac{8}{(\gamma-\frac{3}{2})^2}}}{T}  + \frac{1}{R}.
\]
The second term always dominates the third done. To balance the second and third one we choose $R=x^{\frac{\gamma-1}{1+\frac{17}{(\gamma-\frac{3}{2})^2}}}$ and obtain
\[
\int_T^\infty  \frac{M(e^t)}{e^t}\;dt \ll  T^{-\frac{\gamma-1}{3}} + T^{-\frac{\gamma-1}{1+\frac{17}{(\gamma-\frac{3}{2})^2}}} \ll T^{-\frac{\gamma-1}{3}} + T^{-\frac{(\gamma-1)(\gamma-\frac{3}{2})^2}{18}} \ll T^{-\frac{(\gamma-\frac{3}{2})^2}{36}}.
\]
As $|M(x)-M(y)|\leq |N(x)-N(y)|$, we can pass from $\int M(e^t)$ to $M(x)$ as above, and Theorem~\ref{thm:Moebius} follows.

\appendix

\section{An effective Riemann-Lebesgue Lemma}\label{appendix}

For further reference let us recall the definition of the p-variation of a function on a totally ordered set. It is interpreted as
a measure of its regularity or smoothness.\\

\begin{Def}[p-variation]
Let $f \colon [0,1] \to \C$ be a function and $\mathcal{Z}[0,1]$ be the set of partitions of $[0,1]$, i.e.
$
\mathcal{Z}[0,1] = \left\{ (x_{k})_{k =0}^{n} \ : \ n \in \N, 0=x_{0} < x_{1} < \dots < x_{n-1} < x_{n} = 1 \right\}
$.\\
For $p \geq 1$ the p-variation of the function $f$ is defined as
$$
V_{p}(f) := \left( \sup_{(x_{k}) \in \mathcal{Z}[0,1]} \ \sum_{k=1}^n \left| f(x_k)-f(x_{k-1}) \right|^{p} \right)^{\frac{1}{p}} \in [0,\infty].\\
$$
The function $f$ admits a finite p-variation if $V_{p}$ is finite. The p-variation of a function decreases
with p. Furthermore functions with a finite $1$-variation are called functions of bounded variations. 
\end{Def}

Constant functions as well as step functions are simple examples of a finite p-variation for any $p \geq 1$. Also every function
$f \colon [0,1] \to \C$ admitting a finite p-variation is bounded since
$$
\left| f(x) \right| \leq \left| f(x) - f(0) \right| + \left| f(0) \right| \leq V_{p}(f) + \left| f(0) \right|.
$$
But apart from constant functions or step functions there are more suphisticated functions being of finite p-variation as 
we will see in the following.\\

\begin{Prop}
Let $\alpha \in (0,1]$ and $f \colon [0,1] \to \R$ be $\alpha$-H\"older continuous with a Hölder constant $L$. 
Then $f$ admits a finite p-variation for $p = 1/\alpha$ with $V_{p}(f) \leq L$.\\
\end{Prop}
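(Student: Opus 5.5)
The plan is to verify the definition of $V_p(f)$ directly with $p = 1/\alpha$: bound every partition sum by $L^p$ and then take the supremum.

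Fix a partition $0 = x_0 < x_1 < \dots < x_n = 1$ in $\mathcal{Z}[0,1]$. By the Hölder condition, $|f(x_k) - f(x_{k-1})| \leq L (x_k - x_{k-1})^{\alpha}$ for each $k$. Raising this to the power $p = 1/\alpha$ gives $|f(x_k) - f(x_{k-1})|^p \leq L^p (x_k - x_{k-1})^{\alpha p} = L^p (x_k - x_{k-1})$. The exponent $\alpha p$ equals exactly $1$, which is why this choice of $p$ is the right one.

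Summing over $k$, the right-hand side telescopes:
$$
\sum_{k=1}^n |f(x_k)-f(x_{k-1})|^p \leq L^p \sum_{k=1}^n (x_k - x_{k-1}) = L^p (x_n - x_0) = L^p .
$$
This bound holds uniformly over all partitions. Taking the supremum over $\mathcal{Z}[0,1]$ and then the $p$-th root therefore gives $V_p(f) \leq L$. In particular $V_p(f)$ is finite. Since $\alpha \in (0,1]$, we have $p = 1/\alpha \geq 1$, so $p$ lies in the range allowed by the definition.

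There is no real obstacle in this argument. The only point to check is the identity $\alpha p = 1$; the telescoping sum and the supremum step are immediate once that is in place.
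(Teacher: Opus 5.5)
Your proposal is correct and matches the paper's proof: apply the Hölder bound termwise, use $\alpha p = 1$ so the partition sum telescopes to $L^{p}(x_n - x_0) = L^{p}$, and take the supremum over all partitions. Your added check that $p = 1/\alpha \geq 1$ lies in the range allowed by the definition is a small point the paper leaves implicit.
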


\begin{proof}
Let $(x_{k}) \in \mathcal{Z}[0,1]$. Then
$$
\sum_{k=1}^n \left| f(x_{k})-f(x_{k-1}) \right|^{1/\alpha} \leq \sum_{k=1}^n \big( L \left| x_{k} - x_{k-1} \right|^\alpha \big)^{1/\alpha} 
= L^{1/\alpha}  \left( x_n-x_0 \right) = L^{1/\alpha} 
$$
is satisfied which implies the claim.\\
\end{proof}

Let us now prove an effective Riemann-Lebesgue Lemma for functions of a finite p-variation on $[0,1]$.\\

\begin{Theo}\label{Hoelder_variation}
Let $f \colon [0,1] \rightarrow \R$ be bounded and admit a finite p-variation $V_{p}(f)$. Then 
$$
\left| \int_0^1 f(t) e(xt)\;dt \right| \ \leq \  V_{p}(f)\left( \frac{1}{x} \right)^{\frac{1}{p}}+ \frac{\|f\|_{L^{\infty}}}{x}
$$
holds for any $x > 0$.\\
\end{Theo}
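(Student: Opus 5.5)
The plan is the classical argument for the Riemann--Lebesgue Lemma: cut $[0,1]$ into pieces of length equal to the period $1/x$ of $e(xt)$, and on each full period subtract a constant value of $f$. Since $\int$ of $e(xt)$ over a full period vanishes, only the oscillation of $f$ contributes on each piece. Let $N=\lfloor x\rfloor$ and $x_k=k/x$ for $k=0,\dots,N$; if $x<1$ then $N=0$. We write
\[
\int_0^1 f(t)e(xt)\,dt=\sum_{k=1}^{N}\int_{x_{k-1}}^{x_k} f(t)e(xt)\,dt+\int_{N/x}^{1} f(t)e(xt)\,dt .
\]
The final incomplete piece has length $1-N/x<1/x$, so it is bounded by $\|f\|_{L^\infty}/x$. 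This is where the second term of the claimed bound comes from. If $x<1$, the whole integral is this piece, of modulus at most $\|f\|_{L^\infty}\le \|f\|_{L^\infty}/x$, and we are done.

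On each full interval $I_k=[x_{k-1},x_k]$ we have $\int_{I_k}e(xt)\,dt=0$. For any point $s_k\in I_k$ this gives
\[
\Bigl|\int_{I_k} f(t)e(xt)\,dt\Bigr|=\Bigl|\int_{I_k}(f(t)-f(s_k))e(xt)\,dt\Bigr|\le \frac1x\sup_{t\in I_k}|f(t)-f(s_k)| .
\]
Let $\omega_k=\sup_{s,t\in I_k}|f(t)-f(s)|$ be the oscillation of $f$ on $I_k$. Summing, the contribution of the full periods is at most $\frac1x\sum_{k=1}^N\omega_k$. We now apply H\"older's inequality with exponents $p$ and $p'$:
\[
\sum_{k=1}^N\omega_k\le N^{1-1/p}\Bigl(\sum_{k=1}^N\omega_k^p\Bigr)^{1/p}\le x^{1-1/p}\Bigl(\sum_{k=1}^N\omega_k^p\Bigr)^{1/p}.
\]
Multiplying by $1/x$ turns this into $x^{-1/p}(\sum\omega_k^p)^{1/p}$, which is exactly the shape of the first term of the claimed bound.

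The remaining point, and the one that needs care, is $\sum_k\omega_k^p\le V_p(f)^p$. For each $k$ and each $\eta>0$ we choose $s_k,t_k\in I_k$ with $|f(t_k)-f(s_k)|\ge \omega_k-\eta$. The points $s_k,t_k$, ordered increasingly within each $I_k$, refine into a single partition of $[0,1]$, after adding $0$ and $1$ and taking care when chosen points coincide with the shared endpoints of adjacent intervals. The increments $|f(t_k)-f(s_k)|$ appear among the consecutive differences of this partition; the extra differences (between $I_k$ and $I_{k+1}$, or to the endpoints) only add nonnegative terms. Hence $\sum_k(\omega_k-\eta)^p\le V_p(f)^p$, and letting $\eta\to0$ gives the claim. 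Boundary coincidences cause no trouble: if $t_k=s_{k+1}$, the two increments are still consecutive differences in the same partition. Combining the two pieces yields
\[
\Bigl|\int_0^1 f(t)e(xt)\,dt\Bigr|\le V_p(f)x^{-1/p}+\frac{\|f\|_{L^\infty}}{x}.
\]

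I expect the main obstacle to be this step, identifying the oscillations with increments of one admissible partition, together with bookkeeping of constants, for instance whether a factor such as $2\pi$ or $1/x$ versus $1/(2\pi x)$ appears. Since $\int_{I_k}|e(xt)|\,dt=1/x$ exactly, no additional constant arises, and the bound holds as stated.
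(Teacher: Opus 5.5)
Your proof is correct and follows essentially the same route as the paper's. Both arguments split off the $\lfloor x\rfloor$ full periods of length $1/x$, bound the leftover piece by $\|f\|_{L^\infty}/x$, subtract a constant on each period, apply H\"older, and dominate the resulting sum of $p$-th powers of oscillations by $V_p(f)^p$ via a single partition. The differences are cosmetic: the paper applies H\"older to the integral $\int_0^{n/x}|f(t)-f(\lfloor xt\rfloor/n)|\,dt$ with fixed reference points $k/n$, whereas you apply it to the discrete sum $\sum_k\omega_k$ using full oscillations, and your block-by-block ordering handles cleanly the fact that the paper's list $0,x_0,\frac1n,x_1,\dots$ need not be increasing.
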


\begin{proof} \
\begin{enumerate}[i.)]
\item Let $x \in (0,1)$. Then
$$
\left|\int_0^1 f(t) e(xt)\;dt\right| \ \leq \ \int_{0}^{1} |f(t)| \;dt \ \leq \ \| f \|_{L^{\infty}} \ \leq \ \frac{ \| f \|_{L^{\infty}} }{x}
$$
is satisfied where the last inequality is due to $x<1$. Hence the claim is implied.\\
\item Let $x \geq 1$ and define $n=\lfloor x \rfloor$. Then $1 \leq n \leq x$ and
\begin{eqnarray*}
\left|\int_0^1 f(t) e(xt)\;dt\right|  & = & \left|\int_0^{n/x} f(t) e(xt)\;dt + \int_{n/x}^1 f(t) e(xt)\;dt \right| \\
 & \leq  &  \left| \int_0^{n/x} f(t) e(xt)\;dt \right| + \int_{n/x}^1 |f(t)|\;dt
 \end{eqnarray*}
are implied. Then we argue that
$$
\int_{n/x}^1 |f(t)|\;dt \leq \| f \|_{L^{\infty}} \left( 1-\frac{n}{x} \right)  = \|f\|_{L^{\infty}} \frac{x - \lfloor x\rfloor}{x} 
\leq \frac{\|f\|_{L^\infty}}{x}
$$ 
is true. For the first term we write

\begin{eqnarray*}
\int_0^{n/x} f(t) e(xt)\;dt & = &  \int_0^{n/x} \left( f \left(\frac{\lfloor xt \rfloor}{n}\right) + f(t) - f\left(\frac{\lfloor xt \rfloor}{n}\right)\right) e(xt)\;dt \\
  & = & \int_0^{n/x} f\left(\frac{\lfloor xt \rfloor}{n}\right) e(xt)\;dt + 
\int_0^{n/x} \left( f(t)-f\left(\frac{\lfloor xt \rfloor}{n}\right) \right) e(xt)\;dt \\
& = & \sum_{k=0}^{n-1} \int_{k/x}^{(k+1)/x}  f\left(\frac{k}{n}\right) e(xt)\;dt +
\int_0^{n/x} \left( f(t)-f\left(\frac{\lfloor xt \rfloor}{n}\right)\right) e(xt)\;dt \\
& = & \sum_{k=0}^{n-1}  f\left( \frac{k}{n} \right) \underbrace{\int_{k/x}^{(k+1)/x} e(xt)\;dt}_{=0}  
+ \int_0^{n/x} \left(f(t)-f\left(\frac{\lfloor xt \rfloor}{n}\right)\right) e(xt)\;dt. \\
 \end{eqnarray*}

Note that here $n$ is strictly greater than $0$ in this case such that terms like $\lfloor xt\rfloor/n$ and $k/n$
are well defined. Therefore $\int_0^{n/x} f(t) e(xt)\;dt$ is equal to 
$\int_0^{n/x} \left(f(t)-f\left(\frac{\lfloor xt \rfloor}{n}\right)\right) e(xt)\;dt$. Next, we apply Hölder's inequality 
for $p \geq 1$ to conclude to 

\begin{eqnarray*}
\left| \int_0^{n/x} f(t) e(xt)\;dt \right| & = & 
\left| \int_0^{n/x} \left( f(t)-f\left(\frac{\lfloor xt \rfloor}{n}\right) \right) e(xt)\;dt \right| \\
& \leq & \int_0^{n/x} \left| f(t)-f\left(\frac{\lfloor xt \rfloor}{n}\right) \right| \;dt \\
& \leq & \left( \int_0^{n/x} \left| f(t)-f\left(\frac{\lfloor xt \rfloor}{n}\right) \right|^{p} \;dt \right)^{\frac{1}{p}}  
\left(\int_0^{n/x} 1 \;dx\right)^{1-\frac{1}{p}}\\
& \leq & \left( \sum_{k=0}^{n-1} \int_{k/x}^{(k+1)/x} 
\left| f(t)-f\left(\frac{k}{n} \right) \right|^{p} \;dt \right)^{\frac{1}{p}}  \\
& \leq & \left( \frac{1}{x}  \sum_{k=0}^{n-1} \  
\sup_{t \in [k/x, (k+1)/x)} \left| f(t)-f\left(\frac{k}{n}\right) \right|^p \ \right)^{\frac{1}{p}} .\\
\end{eqnarray*}

For this last term we want to use the finite p-variation of $f$. Therefore let
$\varepsilon \in (0,1)$ be arbitrary but fixed and pick for each $k$ a real number 
$x_{k} \in \left[\frac{k}{t}, \frac{k+1}{t}\right)$ such that

$$
\left| f(x_{k}) - f\left(\frac{k}{n}\right) \right| \geq (1-\varepsilon) 
\sup_{t \in [\frac{k}{x}, \frac{k+1}{x})} \left|f(t)-f\left(\frac{k}{n}\right) \right|
$$
is true. Define $\delta_k = \left| f(x_{k})-f\left(\frac{k}{n}\right) \right|$ and conclude to 

$$
\sum_{k=0}^{n-1} \  
\sup_{t \in [\frac{k}{x}, \frac{k+1}{x})} \left|f(t)-f\left(\frac{k}{n}\right)\right|^{p} \leq 
\left( \frac{1}{1-\varepsilon} \right)^{p} \ \sum_{k=0}^{n-1} \delta_{k}^{p}. 
$$
We add the terms $\left| f\left(\frac{k+1}{n}\right) - f(x_{k}) \right|^{p}$ for each $k$ to this sum on the right 
to get a partition $\tilde{x}_{0} = 0, \ \tilde{x}_{1} = x_{0}, \ \tilde{x}_{2} = \frac{1}{n}, \ \tilde{x}_{3} = x_{1}, \ 
\tilde{x}_{4} = \frac{2}{n}, \ \dots$ on $[0,1]$ such that

$$
\left( \frac{1}{1-\varepsilon} \right)^{p} \ \sum_{k=0}^{n-1} \delta_{k}^{p} \leq 
\left( \frac{1}{1-\varepsilon} \right)^{p} \left( V_{p}(f) \right)^{p}
$$ 
is justified. Hence 
$\left| \int_0^{n/x} f(t) e(xt)\;dt \right| \leq \frac{V_{p}(f)}{1-\varepsilon} \left( \frac{1}{x} \right)^{1/p}$ and

\begin{equation*}
\left| \int_0^1 f(t) e(xt)\;dt \right| \leq \frac{V_{p}(f)}{1-\varepsilon} \left( \frac{1}{x} \right)^{\frac{1}{p}} + \frac{\|f\|_{L^\infty}}{x}
\end{equation*}
are implied. Letting $\varepsilon \to 0^{+}$ proves the claim.
\end{enumerate}
\end{proof}

Finally let us prove the following useful lemma for our argumentation.\\

\begin{Lem}
\label{Lem:products}
Let $g \colon [0,1] \rightarrow \C$ be of finite $p$-variation $V_{p}(g)$ and let $f \colon [0,1] \rightarrow \C$ be differentiable
with a bounded derivative on $[0,1]$. Then the product of these functions $fg$ is also of finite $p$-variation bounded such that 
$$
V_{p}(fg) \ \leq \ 2 \left( \| g \|_{L^\infty}^p \|f'\|_{L^\infty}^p + \|f\|_{L^\infty}^p V_{p}(g)^{p} \right)^{\frac{1}{p}}.
$$
\end{Lem}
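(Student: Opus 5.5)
We fix a partition $0=x_0<x_1<\dots<x_n=1$ and estimate each increment of $fg$ by the usual product splitting
$$
f(x_k)g(x_k)-f(x_{k-1})g(x_{k-1}) = \bigl(f(x_k)-f(x_{k-1})\bigr)g(x_k) + f(x_{k-1})\bigl(g(x_k)-g(x_{k-1})\bigr).
$$
By the mean value theorem, applied to real and imaginary parts, the first term is bounded in absolute value by $C\|f'\|_{L^\infty}\|g\|_{L^\infty}|x_k-x_{k-1}|$. Here $C=1$ if $f$ is real and $C\le 2$ in the complex case; alternatively write $f(x_k)-f(x_{k-1})=\int_{x_{k-1}}^{x_k}f'$, which holds because $f$ is differentiable with bounded derivative and is therefore Lipschitz and absolutely continuous, and this gives $C=1$. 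The second term is at most $\|f\|_{L^\infty}|g(x_k)-g(x_{k-1})|$.

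Next we pass to $p$-th powers using the elementary inequality $(a+b)^p\le 2^{p-1}(a^p+b^p)$ for $a,b\ge 0$, which follows from convexity of $u\mapsto u^p$. Summing over $k$ gives
$$
\sum_k |\Delta_k(fg)|^p \le 2^{p-1}\Bigl(\|g\|_{L^\infty}^p\|f'\|_{L^\infty}^p\sum_k |x_k-x_{k-1}|^p + \|f\|_{L^\infty}^p\sum_k|g(x_k)-g(x_{k-1})|^p\Bigr).
$$
The key observation is that for $p\ge 1$ and mesh increments in $[0,1]$ we have $\sum_k|x_k-x_{k-1}|^p\le \sum_k|x_k-x_{k-1}|=1$. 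The second sum is at most $V_p(g)^p$ by definition. Taking the supremum over partitions and then $p$-th roots gives
$$
V_p(fg)\le 2^{1-1/p}\bigl(\|g\|_{L^\infty}^p\|f'\|_{L^\infty}^p+\|f\|_{L^\infty}^pV_p(g)^p\bigr)^{1/p},
$$
and $2^{1-1/p}\le 2$ yields the claimed bound. Finiteness of $\|g\|_{L^\infty}$ is guaranteed by the remark preceding the proposition: a function of finite $p$-variation is bounded. Since $f$ is Lipschitz on $[0,1]$, it is bounded as well.

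There is no real obstacle here. The one point needing care is the complex-valued mean value step. The integral representation of $f(x_k)-f(x_{k-1})$ avoids any extra constant, and even the crude factor $2$ from splitting into real and imaginary parts is absorbed into the slack between $2^{1-1/p}$ and $2$, since we can check $2^{1-1/p}\cdot 2$ against the form of the bound directly, or simply use the integral version. One should also remark that $\|\cdot\|_{L^\infty}$ is used here as the supremum norm, because pointwise values enter the partition sums. For $g$ of finite $p$-variation, the essential supremum and the pointwise supremum could differ only on null sets, so one reads the norm as $\sup|g|$, consistent with the boundedness remark.
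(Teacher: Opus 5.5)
Your proof is correct and follows the paper's argument. It uses the same product splitting, the Lipschitz bound $|f(x_k)-f(x_{k-1})|\le\|f'\|_{L^\infty}|x_k-x_{k-1}|$, and summation via $\sum_k|x_k-x_{k-1}|^p\le 1$ and $V_p(g)$. The only difference is that your convexity step $(a+b)^p\le 2^{p-1}(a^p+b^p)$ replaces the paper's $(a+b)^p\le(2\max(a,b))^p\le 2^p(a^p+b^p)$, which gives you the slightly sharper constant $2^{1-1/p}$.

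One caution about your aside: a crude factor $2$ from splitting $f$ into real and imaginary parts is \emph{not} absorbed by the slack when $p>1$. It puts $2^{2-1/p}>2$ in front of the $\|g\|_{L^\infty}\|f'\|_{L^\infty}$ term whenever the $\|f\|_{L^\infty}V_p(g)$ term is negligible. So you genuinely need the integral representation $f(x_k)-f(x_{k-1})=\int_{x_{k-1}}^{x_k}f'$ to get constant $1$. Alternatively, choose $\theta$ with $e^{-i\theta}(f(x_k)-f(x_{k-1}))\ge 0$ and apply the mean value theorem to $\Re(e^{-i\theta}f)$.
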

\begin{proof}
Let $0 = x_0 < x_1 < \dots < x_n = 1$ be a partition of $[0,1]$. Then we have to estimate
\[
\sum_{k=1}^{n} \left| (fg)(x_{k}) - (fg)(x_{k-1}) \right|^{p}.
\]
We have
\begin{eqnarray*}
\left| (fg)(x_{k}) - (fg)(x_{k-1}) \right|^p & \leq & 
\left(  \left| g(x_{k}) \right| \cdot \left| f(x_{k}) - f(x_{k-1}) \right| + 
\left| f(x_{k-1}) \right| \cdot \left| g(x_{k})-g(x_{k-1}) \right|  \right)^{p} \\
& \leq & \left( \ 2 \cdot \max \left(  \left| g(x_{k}) \right| \cdot \left| f(x_{k}) - f(x_{k-1}) \right|,  
\left| f(x_{k-1}) \right| \cdot \left| g(x_{k})-g(x_{k-1}) \right|   \right) \ \right)^{p} \\
 & \leq & 
2^{p} \left( \
\|g\|_{L^\infty}^{p} \| f^{\prime} \|_{L^\infty}^{p} |x_{k} - x_{k-1}|^{p} + \|f\|_{L^\infty}^{p} \left| g(x_{k})-g(x_{k-1}) \right|^{p} \ \right)
\end{eqnarray*}
and hence
\begin{eqnarray*}
\sum_{k=1}^{n} |(fg)(x_{k}) - (fg)(x_{k - 1})|^p & \leq & 2^{p} 
\sum_{k=1}^n \| g \|_{L^\infty}^p \| f^{\prime} \|_{L^\infty}^p \left| x_k - x_{k-1} \right|^p + 
\|f\|_{L^\infty}^p \left| g(x_{k}) - g(x_{k-1}) \right|^p\\
& \leq & 2^{p} \left( \|g\|_{L^\infty}^p \| f^{\prime} \|_{L^\infty}^p + \|f\|_{L^\infty}^p V_{p}(g)^{p} \right) 
\end{eqnarray*}
\end{proof}


\begin{thebibliography}{9}
\bibitem{Tobias} T. Bekehermes, {\em Allgemeine Dirichletreihen und Primzahlverteilung in
arithmetischen Halbgruppen}, Clausthal, 2003.
\bibitem{Diamond} H. Diamond, The Prime Number Theorem for Beurling's Generalized Numbers, {\em J. Number Theory} {\bf 1} (1969), 200--207.
\bibitem{Diamond2} H. Diamond, Chebyshev estimates for Beurling generalized prime numbers, {\em Proceedings of the AMS} {\bf 39} (1973), 503--508.
\bibitem{Kahane} C. S. Kahane, 
Generalizations of the Riemann-Lebesgue and Cantor-Lebesgue lemmas, {\em Czechoslovak Mathematical Journal} {\bf 30} (1980), 108--117.
\bibitem{Riemenschneider} O. Riemenschneider, Simple analytic proofs of some versions of the abstract prime number theorem, in: Brasselet, Jean-Paul (ed.) et al., {\em Singularities}, Niigata-Toyama 2007, 249--283 
\end{thebibliography}
\end{document}